\DeclareMathOperator{\tors}{tors}
\newtheorem{theorem}{Theorem}
\newtheorem{corollary}[theorem]{Corollary}
\newtheorem{lemma}[theorem]{Lemma}
\newtheorem{proposition}[theorem]{Proposition}
\newcommand{\cC}{{\mathcal{C}}}
\newcommand{\cO}{{\mathcal{O}}}
\newcommand{\Q}{{\mathbb{Q}}}
\newcommand{\Z}{{\mathbb{Z}}}
\title[Torsion of rational elliptic curves over quadratic fields II]{Torsion of rational elliptic curves \\ over quadratic fields II}
\author{Enrique Gonz\'alez--Jim\'enez}
\address{Universidad Aut{\'o}noma de Madrid, Departamento de Matem{\'a}ticas, Madrid, Spain}
\email{enrique.gonzalez.jimenez@uam.es}
\urladdr{http://www.uam.es/enrique.gonzalez.jimenez}
\author{Jos\'e M. Tornero}
\address{Departamento de \'Algebra and IMUS, Universidad de Sevilla. P.O. 1160. 41080 Sevilla, Spain.}
\email{tornero@us.es}
\thanks{The first author was partially  supported by the grant MTM2012--35849. The second author was partially supported by the grant FQM--218 and P12--FQM--2696.}
\subjclass[2010]{Primary: 11G05, 11G30; Secondary: 11B25, 11D45, 14G05}
\keywords{Elliptic curves, Torsion subgroup, rationals, quadratic fields.}
\begin{document}

\begin{abstract}
Let $E$ be an elliptic curve defined over $\Q$ and let $G = E(\Q)_{\tors}$ be the associated torsion group. In a previous paper, the authors studied, for a given $G$, which possible groups $G \leq H$ could appear such that $H=E(K)_{\tors}$, for $[K:\Q]=2$. In the present paper, we go further in this study and compute, under this assumption and for every such $G$, all the possible situations where $G \neq H$. The result is optimal, as we also display examples for every situation we state as possible. As a consequence, the maximum number of quadratic number fields $K$ such that $E(\Q)_{\tors}\neq E(K)_{\tors}$ is easily obtained.
\end{abstract}

\maketitle

\section{Introduction}

Let $E$ be an elliptic curve defined over a number field $L$. The Mordell-Weil Theorem states that the set of $L$--rational points, $E(L)$, is a finitely generated abelian group. So it can be written as $E(L) = E(L)_{\tors} \oplus \Z^r$, for some non-negative integer $r$ (called the rank of $E(L)$) and some finite torsion subgroup $E(L)_{\tors}$. It is well known that there exist two positive integers $n,m$ such that $n|m$ and $E(L)_{\tors}$ is isomorphic to $\cC_n \times \cC_m$, where $\cC_n$ is the cyclic group of order $n$ \cite{Silverman2009}.

Through this paper, we will often write $G=H$ (respectively $G \leq H$ or $G < H$) for the fact that $G$ is {\em isomorphic} to $H$ (repectively, isomorphic to a subgroup of $H$ or to a proper subgroup of $H$) without further detail on the precise isomorphism. 

We define some useful sets for the sequel: 
\begin{itemize}
\item Let $\Phi(d)$ be the set of possible groups that can appear as the torsion subgroup of an elliptic curve defined over a certain number field $L$ of degree $d$. 
\item Let $\Phi_\Q(d)$ be the set of possible groups that can appear as the torsion subgroup over a number field of degree $d$, of an elliptic curve $E$ defined over the rationals.
\item Let $G \in \Phi(1)$. We will write $\Phi_\Q(d,G)$ the set of possible groups that can appear as the torsion subgroup over any number field $L$ of degree $d$, of an elliptic curve $E$ defined over the rationals, such that $E(\Q)_{\tors}= G$.
\end{itemize}

Connected to these sets, some known results are:
\begin{itemize}
\item Mazur's landmark papers \cite{Mazur1977,Mazur1978} established that
$$
\Phi(1) = \left\{ \cC_n \; | \; n=1,\dots,10,12 \right\} \cup \left\{ \cC_2 \times \cC_{2m} \; | \; m=1,\dots,4 \right\}.
$$
\item After this, in a long series of papers by Kenku, Momose and Kamienny ending in \cite{Kamienny1992, Kenku-Momose1988}, the quadratic case was given a description:
\begin{eqnarray*}
\Phi(2) &=& \left\{ \cC_n \; | \; n=1,\dots,16,18 \right\} \cup \left\{ \cC_2 \times \cC_{2m} \; | \; m=1,\dots,6 \right\}  \cup \\
&& \left\{ \cC_3 \times \cC_{3r} \; | \; r=1,2 \right\} \cup \left\{ \cC_4 \times \cC_4 \right\}.  
\end{eqnarray*}
\item The sets $\Phi_{\mathbb Q}(d)$ have been completely described by Najman \cite{Najman2012preprint} for $d = 2, 3$:
\begin{eqnarray*}
\Phi_{\mathbb Q}(2) &=& \left\{ \cC_n \; | \; n=1,\dots,10,12,15,16 \right\} \cup \left\{ \cC_2 \times \cC_{2m} \; | \; m=1,\dots,6 \right\} \cup \\
&& \left\{ \cC_3 \times \cC_{3r} \; | \; r=1,2 \right\} \cup \left\{ \cC_4 \times \cC_4 \right\},  \\
\Phi_{\mathbb Q}(3) &=& \left\{ \cC_n \; | \; n=1,\dots,10,12,13,14,18,21 \right\} \cup \left\{ \cC_2 \times \cC_{2m} \; | \; m=1,\dots,4,7 \right\}.
\end{eqnarray*}
\item The work of Fujita \cite{Fujita2005} gave the precise list (building upon previous work of Laska and Lorenz \cite{Laska-Lorenz1985}) of torsion groups over the maximal elementary abelian $2$--extension of $\Q$, of elliptic curves defined over the rationals. The full list of such groups will be denoted by  $\Phi_{\mathbb Q}(2^{\infty})$:
\begin{eqnarray*}
\Phi_{\mathbb Q}(2^{\infty}) &=& \left\{ \cC_n \; | \; n=1,3,5,7,9,15 \right\} \cup \left\{ \cC_2 \times \cC_{2m} \; | \; m=1,...,6,8 \right\} \cup \\
&& \left\{ \cC_3 \times \cC_3 \right\} \cup \left\{ \cC_4 \times \cC_{4r}  \; | \; r=1,\dots,4 \right\}\cup \left\{ \cC_{2s} \times \cC_{2s}  \; | \; s=3,4 \right\}.
\end{eqnarray*}
\item The set $\Phi_{\mathbb Q}(2,G)$, for non--cyclic $G$ was characterized by Kwon \cite{Kwon1997}.
\end{itemize}

Finally, in \cite{Gonzalez-Jimenez-Tornero2014}, we gave a precise description of the set $\Phi_{\mathbb Q}(2,G)$, for all $G \in \Phi(1)$.

\begin{theorem}\label{teo1}
For $G \in \Phi(1)$, the set $\Phi_\Q(2,G)$ is the following:
$$
\begin{array}{|c|c|}
\hline
G & \Phi_\Q \left(2,G \right)\\
\hline
\cC_1 & \left\{ \cC_1\,,\, \cC_3 \,,\, \cC_5\,,\, \cC_7\,,\, \cC_9 \right\} \\
\hline
\cC_2 & \left\{ \cC_2\,,\, \cC_4 \,,\, \cC_6\,,\, \cC_8\,,\, \cC_{10}\,,\, \cC_{12}\,,\, \cC_{16}\,,\, \cC_2 \times \cC_{2}\,,\, \cC_2 \times \cC_{6}\,,\, \cC_2 \times \cC_{10} \right\} \\ 
\hline
\cC_3 & \left\{ \cC_3, \; \cC_{15}, \; \cC_3 \times \cC_3 \right\} \\
\hline
\cC_4 & \left\{ \cC_4 \,,\,\cC_8\,,\, \cC_{12}\,,\, \cC_2 \times \cC_{4}\,,\, \cC_2 \times \cC_{8}\,,\, \cC_2 \times \cC_{12}\,,\, \cC_4 \times \cC_4\right\} \\
\hline
\cC_5 & \left\{ \cC_5, \; \cC_{15} \right\} \\
\hline
\cC_6& \left\{ \cC_6, \; \cC_{12}, \; \cC_2 \times \cC_6, \; \cC_3 \times \cC_6 \right\} \\
\hline
\cC_7 & \left\{ \cC_7 \right\} \\
\hline
\cC_8 & \left\{ \cC_8, \; \cC_{16}, \; \cC_2 \times \cC_8 \right\} \\
\hline
\cC_9 & \left\{ \cC_9 \right\} \\
\hline
\cC_{10} & \left\{ \cC_{10}, \; \cC_2 \times \cC_{10} \right\} \\
\hline
\cC_{12} & \left\{ \cC_{12}, \; \cC_2 \times \cC_{12} \right\} \\
\hline
\cC_2 \times \cC_2 & \left\{ \cC_2 \times \cC_{2}\,,\, \cC_2 \times \cC_{4}\,,\, \cC_2 \times \cC_{6}\,,\, \cC_2 \times \cC_{8}\,,\,  \cC_2 \times \cC_{12}\right\} \\
\hline
\cC_2 \times \cC_4 & \left\{ \cC_2 \times \cC_4, \; \cC_2 \times \cC_8, \; \cC_4 \times \cC_4 \right\} \\
\hline
\cC_2 \times \cC_6 & \left\{ \cC_2 \times \cC_6, \cC_2 \times \cC_{12} \right\} \\
\hline
\cC_2 \times \cC_8 & \left\{ \cC_2 \times \cC_8 \right\} \\
\hline
\end{array}
$$
\end{theorem}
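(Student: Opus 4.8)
The plan is to study how torsion grows in a single quadratic step by means of the quadratic twist. Fix $E/\Q$ with $E(\Q)_{\tors}=G$, a quadratic field $K=\Q(\sqrt{d})$ with $\Gal(K/\Q)=\langle\sigma\rangle$, and let $E^{d}$ be the twist of $E$ by $d$, with $G'=E^{d}(\Q)_{\tors}$. The two basic facts I would record first are that $E\cong E^{d}$ over $K$ (so $E(K)_{\tors}\cong E^{d}(K)_{\tors}=:H$), and that inside $E(K)$ the subgroups $E(\Q)$ and $E^{d}(\Q)$ are precisely the $+1$ and $-1$ eigenspaces of $\sigma$. From the map $P\mapsto P-\sigma(P)$, whose kernel is $E(\Q)$ and whose image lies in the $-1$ eigenspace, one gets the injection $H/G\hookrightarrow G'$, and symmetrically $H/G'\hookrightarrow G$; and since an involution is diagonalizable on a module of odd order, for every odd prime $p$ one obtains the clean splitting $H[p^{\infty}]\cong G[p^{\infty}]\times G'[p^{\infty}]$. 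Finally I would note that $E$ and $E^{d}$ share their $2$--division polynomial, so $E[2]\cong E^{d}[2]$ as Galois modules and in particular $G[2]=G'[2]$.

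These tools give the upper bound---that $\Phi_\Q(2,G)$ is contained in the set displayed in the table---one prime at a time. Membership in Najman's set $\Phi_\Q(2)$ already restricts $H$ severely; the point is to refine this using $G$. For the odd part, $H[p^{\infty}]=G[p^{\infty}]\times G'[p^{\infty}]$ with both factors the $p$--part of a group in Mazur's list, while the Weil pairing forces $\zeta_{p}\in K$ whenever $H[p]\cong\cC_{p}\times\cC_{p}$; this leaves $p=3$ (with $K=\Q(\sqrt{-3})$) as the only odd prime admitting diagonal growth, which is exactly why $\cC_{3}\times\cC_{3}$ but no larger $\cC_{p}\times\cC_{p}$ occurs. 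For the $2$--part, the crucial elementary observation is that if $G[2]=0$ then the $2$--division cubic is irreducible over $\Q$ and hence has no root in any quadratic field, so $E(K)[2]=0$ and $H$ is forced to be odd; this single remark removes every even group from the rows with $G$ odd and matches those rows against the odd parts of $\Phi_\Q(2)$.

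The main obstacle is the remaining $2$--primary analysis for the even groups $G$, where the bounds above do permit groups that in fact never occur (for instance $H/G\hookrightarrow G'$ and $H/G'\hookrightarrow G$ do not by themselves exclude $G=\cC_{2}$, $H=\cC_{2}\times\cC_{4}$). Here I would argue with the fields of definition of the $2$-- and $4$--power torsion points: using that $E$ and $E^{d}$ share the $2$--division polynomial, that $\cC_{4}\times\cC_{4}$ requires $i\in K$, and that a fixed quadratic $K$ can adjoin only one new square root, one shows that a single quadratic extension cannot simultaneously complete a cyclic $2$--part to the full $2$--torsion and raise its height (e.g. pass from $\cC_{2}$ to a group containing both $\cC_{2}\times\cC_{2}$ and a point of order $4$). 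Carrying this through case by case, and combining it with the odd--part splitting, pins down the even rows exactly.

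It remains to prove optimality, i.e. the reverse inclusion, which I would handle by exhibiting for each entry an explicit witness: an elliptic curve $E/\Q$ with the prescribed $G=E(\Q)_{\tors}$ together with a quadratic field $K$ realizing the prescribed $H=E(K)_{\tors}$. These are produced by running through the twist families of the finitely many relevant curves (those admitting the needed rational isogenies), guided by the splitting $H[p^{\infty}]\cong G[p^{\infty}]\times G'[p^{\infty}]$ to predict which twist $E^{d}$ supplies the extra odd torsion and which quadratic field completes the $2$--part. Presenting one such pair $(E,K)$ for every group listed in the table then closes the argument.
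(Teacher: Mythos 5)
You should first be aware that this paper does not prove Theorem \ref{teo1} at all: it is quoted from the authors' earlier paper \cite{Gonzalez-Jimenez-Tornero2014} (with the non-cyclic rows originally due to Kwon \cite{Kwon1997}), so there is no internal proof to compare you against; the fair comparison is with the arguments in those references. Measured that way, your framework is exactly the standard one they use --- the twist $E^d$, the identification of $E(\Q)$ and $E^d(\Q)$ with the $\pm 1$ eigenspaces of $\sigma$ inside $E(K)$, the injections $H/G\hookrightarrow G'$ and $H/G'\hookrightarrow G$, the splitting $H[p^\infty]\cong G[p^\infty]\times G'[p^\infty]$ for odd $p$, Najman's $\Phi_\Q(2)$, Mazur applied to $G'$, the Weil pairing, and the shared $2$-division polynomial --- and your treatment of the rows with $G$ of odd order is complete and correct. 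In fact your toolkit is stronger than you give it credit for: your worry case $G=\cC_2$, $H=\cC_2\times\cC_4$ needs no field-of-definition argument, since one checks directly that no involution of $\cC_2\times\cC_4$ has fixed subgroup of order $2$, while the Galois involution must have fixed subgroup exactly $G$.

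The genuine gap is in the remaining even-$G$ exclusions, where your guiding principle (``a single quadratic extension cannot simultaneously complete a cyclic $2$-part to the full $2$-torsion and raise its height,'' justified by ``$K$ adjoins only one new square root'') is not a proof and is demonstrably not even the right statement. Two required exclusions it does not touch: (i) $\cC_4\times\cC_4\notin\Phi_\Q(2,\cC_2\times\cC_2)$ --- here the $2$-torsion is already full over $\Q$, nothing is being ``completed,'' the involution group theory and the twist constraint are both consistent (one can have $G'=\cC_2\times\cC_2$), and the Weil pairing only pins $K=\Q(\sqrt{-1})$. The actual proof needs the halving criterion (Lemma \ref{2dmknapp}): writing $e_1,e_2,e_3$ for the roots of the cubic, all differences $e_i-e_j$ would have to be squares in $\Q(\sqrt{-1})$, hence each is $\pm$ a rational square; taking $e_i$ maximal makes two of them positive, hence honest rational squares, so that $2$-torsion point is already halved over $\Q$, contradicting $E(\Q)_{\tors}=\cC_2\times\cC_2$. (ii) $\cC_2\times\cC_{12}\notin\Phi_\Q(2,\cC_2\times\cC_4)$ --- this is growth by $3$ on top of an even $G$; your odd-part splitting happily allows $G'$ with $\cC_3\le G'$, your $2$-primary heuristic says nothing, and the involution analysis is consistent with $G'=\cC_2\times\cC_6\in\Phi(1)$. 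The known proofs use Fujita's result (Theorem \ref{FAA1}: $E(\Q)_{\tors}=\cC_2\times\cC_4$ forces $E(\Q(2^\infty))_{\tors}\in\{\cC_4\times\cC_8,\cC_8\times\cC_8\}$, so no point of order $3$ over any quadratic field) or Kwon's case analysis; nothing in your outline supplies this. Finally, the reverse inclusion --- that every group listed actually occurs for every $G$ --- is half of the theorem, and your proposal only gestures at it (``running through the twist families''); a complete proof must exhibit an explicit pair $(E,K)$ for each entry of the table, as \cite{Gonzalez-Jimenez-Tornero2014} does and as Table \ref{tablagrande} of this paper does for its own theorems.
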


\vspace{.1cm}
Let us fix now some useful notations:
\begin{itemize}
\item We will use letters $L$ and $F$ for generic number fields, whereas $K$ will be reserved for proper quadratic extensions of ${\mathbb Q}$.
\item We will denote by ${\mathbb Q}(2^\infty) = \Q \left( \left\{ \sqrt{m} \, | \, m \in \Z \right\} \right)$, the maximal elementary abelian $2$--extension of $\Q$.
\item Let $E$ be an elliptic curve defined over a number field $L$. Without loss of generality we can assume $E$ is defined by a short Weierstrass form
$$
E: Y^2 = X^3 + AX + B; \quad A,B \in L,
$$
and we will then write,
$$
E(L) = \left\{ (x,y) \in L^2 \; | \; y^2=x^3+Ax+B \right\}\cup \{\cO\},
$$
the set of $L$--rational points of $E$, and $\cO$ its point at infinity.
\item For an elliptic curve $E$, let $\Delta_E$ be, as customary, its discriminant.
\item For an elliptic curve $E$ and an integer $n$, let $E[n]$ be the subgroup of all points whose order is a divisor of $n$ (over $\overline{\Q}$), and let $E(L)[n]$ be the set of points in $E[n]$ with coordinates in $L$, for any number field $L$ (including the case $L=\Q$).
\item Under the same conditions, let $\Q(E[n])$ be the extension generated by all the coordinates of points in $E[n]$.
\item For an elliptic curve $E$ defined over the rationals given by a short Weierstrass equation $E:Y^2=X^3+AX+B$, and a squarefree integer $D$, let $E_D$ denote its quadratic twist. That is, the elliptic curve with the Weierstrass  equation $E_D:DY^2=X^3+AX+B$.
\end{itemize}

Please mind that, in the sequel, for examples and particular curves we will use the Antwerp--Cremona tables and labels \cite{antwerp,cremonaweb}.

Our aim in this paper is to go further than we did in \cite{Gonzalez-Jimenez-Tornero2014}. More precisely, at the end of \cite{Gonzalez-Jimenez-Tornero2014} we posed three questions (named Problems $1$, $2$ and $3$). Problems $1$ and $3$ are generalized in the following question:
\vspace{1mm}

\noindent {\bf Question.--} For a given $G \in \Phi(1)$, let $S = \{ H_1,...,H_n \} \subset \Phi_\Q(2,G)$. Find if there exists a fixed elliptic curve $E$ defined over the rationals and squarefree integers $D_1,...,D_r$ such that:
\begin{itemize}
\item $E(\Q)_{\tors} = G$,
\item $E( \Q ( \sqrt{D_i} ))_{\tors} = H_i$, for $i=1,...,n$,
\item $G=E(K)_{\tors}$ for every other quadratic extension $K/\Q$.
\end{itemize}


We will answer this question, which will imply the solution to Problems $1$ and $3$ in \cite{Gonzalez-Jimenez-Tornero2014} as a direct corollary. 

More precisely, we will prove two main results. First, we will compute explicitly how many quadratic extensions $K/\Q$ one can have with a proper extension of the torsion group for a given curve, depending only on the rational torsion structure. This will be done in the following result:

\begin{theorem}\label{teo2}
Let be $G \in \Phi(1)$ and $H\in \Phi_\Q \left(2,G \right)$ such that $G\ne H$. Then the number $h$ of possible quadratic fields $K$ such that $E(\Q)_{\tors}= G$ and $E(K)_{\tors}= H$ for a fixed rational elliptic curve $E$ is given in the following table:
\\[2mm]

\begin{tabular}{ccc}
\begin{tabular}{|c|c|c|}
\hline
$G$ & $H$ & $h$ \\
\hline
\hline
\multirow{4}{*}{$\cC_1$} & $\cC_3$ & $1\,,\,2$ \\
\cline{2-3}
& $\cC_5$  & \multirow{3}{*}{$1$} \\
\cline{2-2}
& $\cC_7$  & \\
\cline{2-2}
& $\cC_9$  & \\
\hline
\hline
\multirow{9}{*}{$\cC_2$} & $\cC_4$ &  \multirow{3}{*}{$1\,,\,2 $} \\
\cline{2-2}
& $\cC_6$  &  \\
\cline{2-2}
& $\cC_8$  & \\
\cline{2-3}
& $\cC_{10}$  &\multirow{6}{*}{$1$}\\
\cline{2-2}
& $\cC_{12}$  &  \\
\cline{2-2}
& $\cC_{16}$  &\\
\cline{2-2}
& $\cC_{2}\times\cC_{2}$  &\\
\cline{2-2}
& $\cC_{2}\times\cC_{6}$  &\\
\cline{2-2}
& $\cC_{2}\times\cC_{10}$ &\\
\hline
\end{tabular}
&
\begin{tabular}{|c|c|c|}
\hline
$G$ & $H$ & $h$ \\
\hline
\hline
\multirow{2}{*}{$\cC_3$} & $\cC_{15}$ &  \multirow{2}{*}{$1$} \\
\cline{2-2}
& $\cC_{3}\times\cC_{3}$  & \\
\hline
\hline
\multirow{6}{*}{$\cC_4$} & $\cC_8$ & $2$\\
\cline{2-3}
& $\cC_{12}$  &  \multirow{5}{*}{$1$} \\
\cline{2-2}
& $\cC_{2}\times\cC_{4}$  &\\
\cline{2-2}
& $\cC_{2}\times\cC_{8}$  &\\
\cline{2-2}
& $\cC_{2}\times\cC_{12}$  &\\
\cline{2-2}
& $\cC_{4}\times\cC_{4}$  &\\
\hline
\hline
$\cC_{5}$ & $\cC_{15}$  &  $1$\\
\hline
\hline
\multirow{3}{*}{$\cC_6$} & $\cC_{12}$ & $2$\\
\cline{2-3}
& $\cC_{2}\times\cC_{6}$  &  \multirow{2}{*}{$1$} \\
\cline{2-2}
& $\cC_{3}\times\cC_{6}$  &\\
\hline
\end{tabular}
&
\begin{tabular}{|c|c|c|}
\hline
$G$ & $H$ & $h$ \\
\hline
\hline
\multirow{2}{*}{$\cC_8$} & $\cC_{16}$ & $2$\\
\cline{2-3}
& $\cC_{2}\times\cC_{8}$  &  $1$ \\
\hline
\hline
$\cC_{10}$ & $\cC_{2}\times \cC_{10}$  &  $1$\\
\hline
\hline
$\cC_{12}$ & $\cC_{2}\times \cC_{12}$  &  $1$\\
\hline
\hline
\multirow{4}{*}{$\cC_{2}\times \cC_{2}$} & $\cC_{2}\times \cC_{4}$ & $1,\,2,\,3$\\
\cline{2-3}
& $\cC_{2}\times\cC_{6}$  &   \multirow{3}{*}{$1$}  \\
\cline{2-2}
& $\cC_{2}\times\cC_{8}$  &   \\
\cline{2-2}
& $\cC_{2}\times\cC_{12}$  &  \\
\hline
\hline
\multirow{2}{*}{$\cC_{2}\times\cC_4$} & $\cC_{2}\times\cC_{8}$ & $1,\,2$\\
\cline{2-3}
& $\cC_{4}\times\cC_{4}$  &  $1$ \\
\hline
\hline
$\cC_{2}\times\cC_6$ & $\cC_{2}\times\cC_{12}$  &  $1$\\
\hline
\end{tabular}
\end{tabular}
\end{theorem}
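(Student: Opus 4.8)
The plan is to translate the counting problem into one about quadratic twists and about the square-classes cut out by the coordinates of torsion points. The starting point is the standard fact, already exploited in \cite{Gonzalez-Jimenez-Tornero2014}, that for $K=\Q(\sqrt{D})$ the group $E(K)_{\tors}$ is governed by the pair $\left( E(\Q)_{\tors}, E_D(\Q)_{\tors} \right)$: away from $2$ one has $E(K)_{\tors}[p^\infty] \cong E(\Q)_{\tors}[p^\infty] \times E_D(\Q)_{\tors}[p^\infty]$, since a point $P$ of odd order defined over $K$ splits into its $\Gal(K/\Q)$-invariant and anti-invariant parts, the latter being a rational point of the twist $E_D$. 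Consequently, counting the quadratic fields $K$ over which the torsion grows from $G$ to exactly $H$ amounts to counting the distinct squarefree $D\neq 1$ for which $E_D(\Q)_{\tors}$ supplies precisely the missing part of $H$; the first thing to record is that this set of $D$ is finite.

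Next I would make the parametrization explicit. A point $P\in E[n]$ of odd order with $x(P)=u_0\in\Q$ is anti-invariant exactly over $\Q\left(\sqrt{c_0}\right)$, where $c_0=u_0^3+Au_0+B$, so it contributes the single square-class $D=\mathrm{sqf}(c_0)$. Thus each rational root of the relevant division polynomial $\psi_n$ determines one square-class, and $h$ is the number of distinct nontrivial classes so produced. For the $2$-primary growth I would instead use the classical halving criterion: writing $x^3+Ax+B=(x-e_1)(x-e_2)(x-e_3)$, the point $(e_i,0)$ lies in $2E(L)$ iff $e_i-e_j$ is a square in $L$ for the two indices $j\neq i$; demanding this to become true in $K$ but not in $\Q$ again pins down one square-class $D_i$ per rational $2$-torsion point. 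In both situations the number of available ``branches'' --- rational roots of $\psi_n$, or rational points of $E[2]$ --- gives an a priori bound for $h$.

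With these tools the proof becomes a finite case analysis organized by $G$, using Theorem \ref{teo1} to restrict the possible $H$. For each admissible $H$ one counts the branches compatible with the target group and shows that distinct branches can, but need not, fall in distinct square-classes: for instance the three rational $2$-torsion points available when $G=\cC_2\times\cC_2$ yield the bound $h\le 3$ for the jump to $\cC_2\times\cC_4$, whereas a unique rational $x$-coordinate of a point of order $5$ forces $h=1$ for $\cC_5\to\cC_{15}$. The constraints coming from the mod-$n$ Galois image (already severely restricted by the hypothesis $E(\Q)_{\tors}=G$) and from the requirement that $H\in\Phi(2)$ rule out the larger counts and forbid incompatible simultaneous growth over several fields. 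Finally, to prove optimality I would exhibit, for every value of $h$ listed in the table, an explicit curve from the Cremona tables realizing it.

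The main obstacle is the sharpness of the upper bounds in the multivalued rows, i.e. those where $h$ can reach $2$ or $3$. There one must show both that the relevant square-classes cannot be forced apart beyond the stated maximum and that simultaneous growth over the maximal number of fields is actually compatible with $E(K)_{\tors}=H$ (and not larger). The $2$-primary cases are the most delicate, because there the halving criterion couples several independent square-classes through the differences $e_i-e_j$ and through the Weil pairing, so that the possible combinations must be analyzed directly rather than read off from a single division polynomial.
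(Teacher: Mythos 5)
Your framework for the odd part (twist decomposition, anti-invariant points over $\Q(\sqrt{c_0})$) is sound and matches the paper's underlying philosophy, and your treatment of the full rational $2$-torsion case coincides with the paper's Proposition \ref{2dmnc}. But there is a genuine gap at the heart of the $2$-primary analysis for \emph{cyclic} $G$, which is where most of the delicate entries of the table live. The halving criterion you invoke --- $(e_i,0)\in 2E(L)$ iff $e_i-e_j$ is a square in $L$ --- is Knapp's criterion (Lemma \ref{2dmknapp}), and it is only valid when \emph{all three} roots $e_1,e_2,e_3$ lie in $L$. When $G$ and $H$ are cyclic (the cells $\cC_2\to\cC_4,\cC_8$; $\cC_4\to\cC_8$; $\cC_6\to\cC_{12}$; $\cC_8\to\cC_{16}$), the witnessing quadratic field $K$ cannot contain $E[2]$, since otherwise $\cC_2\times\cC_2\le E(K)_{\tors}$ and $H$ would be non-cyclic; so the criterion does not apply over $K$ at all. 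Worse, your accounting ``one square-class $D_i$ per rational $2$-torsion point'' would give $h\le 1$ for $G=\cC_2$, $H=\cC_4$, contradicting the table, where $h=2$ occurs; and for the jumps $\cC_4\to\cC_8$, $\cC_6\to\cC_{12}$, $\cC_8\to\cC_{16}$ the point being halved has order $4$, $6$ or $8$, so a criterion about $2$-torsion points says nothing about it. The paper needs a different tool here: it puts $E$ in Tate normal form and uses the Jeon--Kim--Lee quartic of Proposition \ref{newprop}, whose roots show that the halving can only occur over two explicit fields $K_{\pm}$; the exact count $h=2$ in Lemmas \ref{2dm4}, \ref{2dm6}, \ref{2dm8} then rests on proving $K_+\neq K_-$, which is done by reducing to rational points on auxiliary rank-zero elliptic curves (\texttt{24a1}, \texttt{32a2}, and the $2$-torsion argument in Lemma \ref{2dm2}). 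Nothing in your proposal can produce these two fields or separate them; you flag this as ``the most delicate'' part, but flagging it is not a method.

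A second gap concerns the $h=1$ entries that are \emph{not} forced by a single square-class. Your appeal to ``the mod-$n$ Galois image'' and to ``the requirement that $H\in\Phi(2)$'' cannot rule out, e.g., two quadratic fields with $H=\cC_2\times\cC_8$ over $G=\cC_2\times\cC_2$, or two with $H=\cC_2\times\cC_{12}$ over $G=\cC_2\times\cC_6$, or two with $H=\cC_{12}$ or $\cC_{16}$ over $G=\cC_2$: all of these groups do lie in $\Phi(2)$, so membership is no obstruction. The paper excludes them by passing to the compositum of the hypothetical fields inside $\Q(2^\infty)$ and invoking Fujita's classification (Theorems \ref{FJNT1} and \ref{FAA1}) together with the degree bounds of Proposition \ref{ClEx} (for instance, $\cC_4\times\cC_{12}$ needs degree $\ge 8$ and $\cC_8\times\cC_8$ degree $\ge 16$, while the compositum of two quadratic fields has degree $4$), plus a subgroup-lattice argument inside $\cC_4\times\cC_8$. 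These inputs --- torsion over the maximal elementary abelian $2$-extension, not the quadratic classification $\Phi(2)$ --- are the missing idea; without them the upper bounds in roughly a third of the table's cells remain unproved.
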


Once this is done, we will solve a more delicate problem. We will compute, for a given $G \in \Phi(1)$, all the possibilities for $\Phi_{\mathbb Q}(2,G)$ that actually appear. That is, the full set:
$$
\mathcal{H}_{\Q}(2,G) = \{ S_1,...,S_n \}
$$
satisfying, for all $i=1,...,n$, that 
$$
S_i= \left[ H_1,...,H_m \right]
$$ 
is a list, with $H_j \in \Phi_{\mathbb Q}(2,G) \setminus \{ G \}$, and there exists an elliptic curve $E_i$ defined over ${\mathbb Q}$ such that:
\begin{itemize}
\item $E_i({\mathbb Q})_{\tors} = G$,
\item there are quadratic fields $K_1,...,K_m$ with $E_i ( K_j)_{\tors} = H_j$, for all $j=1,...,m$,
\item $E_i(K)_{\tors} = G$, for any other quadratic extension $K / \Q$.
\end{itemize}

Note that we are admitting the possibility of two (or more) of the $H_j$ being identical. We describe explicitly $\mathcal{H}_{\Q}(2,G)$ in Theorem \ref{teo3}. 

\begin{theorem}\label{teo3}
Let be $G \in \Phi(1)$ such that $\Phi_\Q \left(2,G \right)\ne \{G\}$. Then:\\[2mm]
\begin{tabular}{ccc}
\begin{tabular}{|c|l|}
\hline
$G$ & $\mathcal{H}_{\Q}(2,G)$ \\
\hline\hline
\multirow{6}{*}{$\cC_1$} & $\cC_3$  \\
\cline{2-2}
& $\cC_5$    \\
\cline{2-2}
& $\cC_7$   \\
\cline{2-2}
& $\cC_9$   \\
\cline{2-2}
& $\cC_3,\cC_3$  \\
\cline{2-2}
& $\cC_3,\cC_5$   \\
\hline
\hline
\multirow{14}{*}{$\cC_2$} & $\cC_2\times\cC_2$ \\
\cline{2-2}
& $ \cC_2\times\cC_6 $   \\
\cline{2-2}
& $ \cC_2\times\cC_{10} $  \\
\cline{2-2}
& $ \cC_2\times\cC_2, \cC_6 $ \\
\cline{2-2}
& $\cC_2\times\cC_2 , \cC_{10}$\\
\cline{2-2}
& $ \cC_2\times\cC_6, \cC_6 $  \\
\cline{2-2}
& $ \cC_2\times\cC_2, \cC_4, \cC_4 $ \\
\cline{2-2}
& $ \cC_2\times\cC_2, \cC_6, \cC_6 $   \\
\cline{2-2}
& $ \cC_2\times\cC_2, \cC_8, \cC_8 $   \\
\cline{2-2}
& $ \cC_2\times\cC_2, \cC_4, \cC_8 $   \\
\cline{2-2}
& $\cC_2\times\cC_2, \cC_4, \cC_{12} $ \\
\cline{2-2}
& $ \cC_2\times\cC_2, \cC_4,\cC_{16} $ \\
\cline{2-2}
& $ \cC_2\times\cC_6, \cC_4, \cC_4 $   \\
\cline{2-2}
& $ \cC_2\times\cC_2, \cC_4, \cC_4, \cC_6$  \\
\cline{2-2}
\hline
\end{tabular}
&
\begin{tabular}{|c|l|}
\hline
$G$ & $\mathcal{H}_{\Q}(2,G)$ \\
\hline\hline
\multirow{2}{*}{$\cC_3$}  & $ \cC_{15}$  \\
\cline{2-2}
& $  \cC_3\times\cC_3 $  \\
\hline\hline
\multirow{7}{*}{$\cC_4$}  & $ \cC_2\times\cC_4$   \\
\cline{2-2}
& $ \cC_2\times\cC_8 $   \\
\cline{2-2}
& $ \cC_2\times\cC_{12} $   \\
\cline{2-2}
& $ \cC_4\times\cC_4 $   \\
\cline{2-2}
& $ \cC_2\times\cC_4,\cC_{12} $   \\
\cline{2-2}
& $ \cC_2\times\cC_4, \cC_8, \cC_8 $    \\
\cline{2-2}
&   $ \cC_2\times\cC_8, \cC_8, \cC_8 $  \\
\hline\hline
$\cC_5$ & $ \cC_{15}$  \\
\hline\hline
\multirow{3}{*}{$\cC_6$}  & $ \cC_2\times\cC_6 $   \\
\cline{2-2}
& $ \cC_2\times\cC_6, \cC_3\times\cC_6 $   \\
\cline{2-2}
& $\cC_2\times\cC_6, \cC_{12}, \cC_{12}$   \\
\cline{2-2}
\hline\hline
\multirow{2}{*}{$\cC_8$}  & $ \cC_2\times\cC_8 $  \\
\cline{2-2}
& $\cC_2\times\cC_8, \cC_{16}, \cC_{16}$   \\
\hline\hline
$\cC_{10}$ & $ \cC_2\times\cC_{10}$  \\
\hline\hline
$\cC_{12}$ & $ \cC_2\times\cC_{12}$  \\
\hline
\end{tabular}
&
\begin{tabular}{|c|l|}
\hline
$G$ & $\mathcal{H}_{\Q}(2,G)$ \\
\hline\hline
\multirow{9}{*}{$\cC_2\times\cC_2$}  & $ \cC_2\times\cC_4 $   \\
\cline{2-2}
& $ \cC_2\times\cC_6 $   \\
\cline{2-2}
& $ \cC_2\times\cC_8$    \\
\cline{2-2}
& $ \cC_2\times\cC_{12} $    \\
\cline{2-2}
& $ \cC_2\times\cC_4, \cC_2\times\cC_4 $   \\
\cline{2-2}
& $ \cC_2\times\cC_4, \cC_2\times\cC_6 $    \\
\cline{2-2}
& $ \cC_2\times\cC_4, \cC_2\times\cC_8 $    \\
\cline{2-2}
& $ \cC_2\times\cC_4, \cC_2\times\cC_4, \cC_2\times\cC_4 $   \\
\cline{2-2}
& $ \cC_2\times\cC_4, \cC_2\times\cC_4, \cC_2\times\cC_8 $    \\
\hline\hline
\multirow{5}{*}{$\cC_2\times\cC_4$} & $ \cC_2\times\cC_8 $   \\
\cline{2-2}
& $ \cC_4\times\cC_4 $   \\
\cline{2-2}
& $ \cC_2\times\cC_8, \cC_4\times\cC_4 $   \\
\cline{2-2}
& $ \cC_2\times\cC_8, \cC_2\times\cC_8$    \\
\cline{2-2}
& $ \cC_2\times\cC_8, \cC_2\times\cC_8, \cC_4\times\cC_4 $ \\
\hline\hline
$\cC_2\times\cC_6$ & $ \cC_2\times\cC_{12}$  \\
\hline\end{tabular} 
\end{tabular}\\[2mm]
\end{theorem}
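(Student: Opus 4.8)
The plan is to leverage the twisting dictionary relating torsion over a quadratic field to the torsion of a quadratic twist, and then to translate the existence of a prescribed profile of growth into a statement about the whole family of twists of a single curve. Concretely, for $K=\Q(\sqrt D)$ with $\Gal(K/\Q)=\langle\sigma\rangle$, the $\sigma$--eigenspace decomposition of $E(K)$ identifies the fixed part with $E(\Q)$ and the anti--fixed part with the image of $E_D(\Q)$ under the twisting isomorphism $\phi\colon E_D\to E$ defined over $K$ with $\phi^\sigma=-\phi$. Thus $E(K)_{\tors}$ is determined by $G=E(\Q)_{\tors}$ and $E_D(\Q)_{\tors}$, the only subtlety being the $2$--primary part, where the two groups share the common rational $2$--torsion $E(\Q)[2]=E_D(\Q)[2]$ (this equality because the rational roots of the defining cubic are preserved under twisting, up to the substitution $x\mapsto x/D$). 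Using this, the search for all realizable lists $S=[H_1,\dots,H_m]$ reduces to classifying, for a fixed $E$ with $E(\Q)_{\tors}=G$, the possible multisets $\{E_D(\Q)_{\tors}\}_D$ as $D$ runs over the squarefree integers. Theorem \ref{teo1} tells us which $H$ occur at all, and Theorem \ref{teo2} gives an \emph{a priori} bound on how often each single $H$ can occur; together they bound the length and content of every list.

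The heart of the argument is a set of \emph{coexistence constraints} that cut the combinatorial possibilities down to the stated lists. Three mechanisms are at work. First, since every twist shares the same rational $2$--torsion, the $2$--rank is constant along the family, which already forbids many mixtures. Second, a Weil pairing argument controls odd torsion: if growth over two distinct quadratic fields $K_1=\Q(\sqrt{D_1})$ and $K_2=\Q(\sqrt{D_2})$ produces a subgroup $\cC_n\times\cC_n\le E(L)$ with $n$ odd over the biquadratic compositum $L=\Q(\sqrt{D_1},\sqrt{D_2})$, then $\zeta_n\in L$; as $L/\Q$ has Galois group $(\Z/2\Z)^2$ it contains no cyclic quartic subfield, so $\Q(\zeta_n)\subseteq L$ forces $n=3$. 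This is exactly why a growth to $\cC_5,\cC_7,\cC_9$ can occur over at most one field while $\cC_3$ may occur twice (then producing $\cC_3\times\cC_3$ over the compositum), matching the $h$--values of Theorem \ref{teo2}. Third, and most importantly for the $2$--power entries, the behaviour of the twists is \emph{multiplicative}: for any biquadratic $L$ with quadratic subfields $\Q(\sqrt{D_1}),\Q(\sqrt{D_2}),\Q(\sqrt{D_1D_2})$, the four groups $E(\Q)_{\tors},E_{D_1}(\Q)_{\tors},E_{D_2}(\Q)_{\tors},E_{D_1D_2}(\Q)_{\tors}$ are linked by the decomposition of $E(L)_{\tors}$, so that growth over two of the three subfields determines the growth over the third. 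This forces the $2$--power entries to appear in the bundled patterns visible in the tables (the repeated $\cC_4,\cC_4$; $\cC_8,\cC_8$; $\cC_{16},\cC_{16}$; the triples of $\cC_2\times\cC_4$, and so on).

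With these tools the proof becomes a finite, if laborious, case analysis organized by $G$. For each $G$ one combines the constant $2$--rank, the cyclotomic obstruction for odd primes, and the biquadratic triple relation to show that no list outside the tabulated ones can occur; the repeated--entry lists are precisely those dictated by a single biquadratic field, and the bounds of Theorem \ref{teo2} cap the multiplicities. I expect the main obstacle to be the $2$--power cases $G\in\{\cC_2,\cC_4,\cC_2\times\cC_2,\cC_2\times\cC_4\}$, where several halving conditions interact through overlapping biquadratic diagrams and one must rule out the spurious combinations by a careful study of the division fields $\Q(E[2^k])$ and their entanglement, rather than by a single clean invariant. Finally, to prove optimality one must exhibit, for every list in the tables, an explicit rational curve realizing exactly that profile; this is carried out using the Antwerp--Cremona database, verifying in each case that the prescribed quadratic fields produce the stated torsion and that no other quadratic field produces growth.
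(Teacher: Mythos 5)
Your framework has a genuine gap at its foundation. The claim that $E(K)_{\tors}$ is determined by $G=E(\Q)_{\tors}$ and $E_D(\Q)_{\tors}$ is false precisely in the $2$--primary part: the natural map $E(\Q)\oplus E_D(\Q)\to E(K)$ has kernel and cokernel killed by $2$, and a point $Q\in E(K)$ of order $4$ with $Q^\sigma=Q+T$, $T\in E(\Q)[2]\setminus\{\cO\}$, is invisible to both eigenspaces (neither $Q+Q^\sigma$ nor $Q-Q^\sigma$ recovers it). Deciding exactly when a rational point of order $N$ becomes divisible by $2$ over a quadratic field is not a "subtlety" one can wave away by noting $E(\Q)[2]=E_D(\Q)[2]$; it is the whole content of the problem, and the paper devotes an entire section to it (Knapp's criterion, Lemma \ref{2dmknapp}, and the explicit quartics attached to Tate normal forms in Lemmas \ref{2dm2}, \ref{2dm4}, \ref{2dm6}, \ref{2dm8} and Proposition \ref{2dmnc}). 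For the same reason your ``multiplicativity'' principle --- that growth over two of the three quadratic subfields of a biquadratic field determines growth over the third --- is asserted without proof and fails for $2$--power torsion; the paired entries $\cC_4,\cC_4$; $\cC_8,\cC_8$; $\cC_{16},\cC_{16}$ in the tables do not come from a biquadratic relation among twists but from the fact that the relevant quartic splits over exactly two quadratic fields $K_{\pm}$, whose distinctness the paper proves by ad hoc arguments (square discriminants, or rational points on rank--zero curves). Your odd--order cyclotomic constraint does match the paper's use of Proposition \ref{nthroot}, but that is the easy part.

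Even granting the framework, the decisive exclusions are missing from your sketch, which defers them ("careful study of the division fields\dots rather than a single clean invariant"). The paper rules out $[\cC_4\times\cC_4,\,\cC_{12}]$ and $[\cC_4\times\cC_4,\,\cC_8,\,\cC_8]$ for $G=\cC_4$, and $[\cC_2\times\cC_6,\,\cC_2\times\cC_4,\,\cC_2\times\cC_4]$ for $G=\cC_2\times\cC_2$, by explicit Diophantine work: Tate normal form and Kubert--Elkies parametrizations, conic parametrizations, and a final reduction to rational points on the rank--zero curves \texttt{24a1}, \texttt{32a2}, \texttt{48a1}. It excludes the coexistence of $\cC_8$ and $\cC_{16}$ for $G=\cC_2$ via a homomorphism argument combined with Proposition \ref{FJNT2}, and it leans throughout on Fujita's classification and degree bounds (Theorem \ref{FJNT1}, Theorem \ref{FAA1}, Proposition \ref{ClEx}), none of which appear in your proposal. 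The paper's own remark that Najman's independent twist--based work proves the corollary but \emph{not} Theorems \ref{teo2} and \ref{teo3} is direct evidence that the twist dictionary alone, as you have set it up, cannot carry the combinatorial core of this theorem.
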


In particular, we obtain the following corollary:

\begin{corollary}
If $E$ is an elliptic curve defined over $\Q$, then there are at most four quadratic fields $K_i$, $i=1,\dots,4$, such that $E(K_i)_{\tors}\ne E(\Q)_{\tors}$. That is,
$$
\max_{G \in \Phi(1)} \left\{ \#S \,|\, S \in \mathcal{H}_{\Q}(2,G) \right\} = 4.
$$
\end{corollary}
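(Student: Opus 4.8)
The plan is to read the corollary off directly from Theorem \ref{teo3}, which by construction enumerates \emph{every} list $S$ that can occur in $\mathcal{H}_\Q(2,G)$ as $G$ ranges over $\Phi(1)$. Recall that each such $S=[H_1,\dots,H_m]$ records exactly the torsion groups $E(K_j)_{\tors}$ over the distinct quadratic fields $K_1,\dots,K_m$ at which the torsion grows, while $E(K)_{\tors}=G$ for every other quadratic $K$ (repetitions among the $H_j$ being allowed). Hence $\#S=m$ is precisely the number of quadratic fields $K$ with $E(K)_{\tors}\neq E(\Q)_{\tors}$ for a curve realizing $S$, and the corollary reduces to the purely combinatorial statement $\max_{G,S}\#S=4$ over the table of Theorem \ref{teo3}.

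For the inequality $\le 4$ I would scan the three columns of Theorem \ref{teo3} row by row and check that no list exceeds length four. The only lists of length three or four occur for $G\in\{\cC_2,\cC_4,\cC_6,\cC_8,\cC_2\times\cC_2,\cC_2\times\cC_4\}$, and a direct inspection shows that each of these has at most four entries, length four being attained in a single place. For the reverse inequality I would exhibit that witness: for $G=\cC_2$ the list $[\,\cC_2\times\cC_2,\cC_4,\cC_4,\cC_6\,]$ belongs to $\mathcal{H}_\Q(2,\cC_2)$. Since Theorem \ref{teo3} is asserted to be optimal, i.e.\ every listed $S$ is genuinely realized by some rational elliptic curve $E$ with $E(\Q)_{\tors}=G$, this produces an actual curve with exactly four quadratic fields of torsion growth, giving $\max_{G}\{\#S:S\in\mathcal{H}_\Q(2,G)\}\ge 4$ and hence equality.

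I want to stress that, at the level of the corollary, there is no genuine obstacle: all of the difficulty has already been absorbed into Theorem \ref{teo3}, whose proof is what must actually rule out (for instance) a curve with $E(\Q)_{\tors}=\cC_2$ admitting five or more quadratic fields of torsion growth. As a consistency check one can compare with Theorem \ref{teo2}, which for each individual jump from $G$ to $H$ bounds the number $h$ of quadratic fields producing it; the witnessing list for $G=\cC_2$ uses one field giving $\cC_2\times\cC_2$, two giving $\cC_4$, and one giving $\cC_6$, each compatible with the corresponding value of $h$ in Theorem \ref{teo2}. This is only a heuristic cross-check, however, since Theorem \ref{teo2} bounds each jump in isolation and does not say which combinations may co-occur on a single curve; it is exactly that interaction which Theorem \ref{teo3} settles and on which the precise value $4$ depends.
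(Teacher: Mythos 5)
Your proposal is correct and matches the paper's own (implicit) argument: the paper states the corollary without separate proof precisely because it is read off from the table of Theorem \ref{teo3}, with the upper bound coming from inspection of all lists and the value $4$ witnessed by $S=[\cC_2\times\cC_2,\cC_4,\cC_4,\cC_6]\in\mathcal{H}_\Q(2,\cC_2)$ (realized, per Table \ref{tablagrande}, by the curve \texttt{30a7}). Your observation that all the real content lives in Theorem \ref{teo3}, and that Theorem \ref{teo2} alone could not yield the bound since it controls each jump in isolation, is exactly the right reading.
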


We would like to mention this last result has also been proved independently by Najman \cite{Najman2014preprint}. His proof uses a very different kind of argument and, in particular, Theorems \ref{teo2} and \ref{teo3} do not follow from his results.

\

\noindent {\bf Acknowledgements.}  Both authors are grateful to Noam Elkies, for his insight in the problem concerning curves with $\cC_2 \times \cC_6$ torsion, and in particular for pointing out to them the parametrization in \cite{Elkies}. Also, Yasutsugu Fujita was very kind to explain to us in detail his argument for Proposition \ref{ClEx} and we thank him for this here. Last, the referees this paper was sent to did a painstaking and exhaustive work which greatly improved its overall quality, and both authors are enormously grateful for that.

\section{Some technical results}

Aside from the above main results, a number of auxiliary results are needed for our arguments. 

We already mentioned this result by Fujita:

\begin{theorem}\label{FJNT1}
\cite[Theorem 2]{Fujita2005} Let $E$ be an elliptic curve over $\Q$. Then, the torsion subgroup $E({\mathbb Q}(2^\infty))_{\tors}$ is isomorphic to one of the following $20$ groups:
$$
\begin{array}{ll}
\cC_N & \mbox{ for } N=1,3,5,7,9,15; \\
\cC_2 \times \cC_{2N} & \mbox{ for } N=1,...,6,8; \\
\cC_4 \times \cC_{4N} & \mbox{ for } N=1,...,4; \\
\cC_{2N} \times \cC_{2N} & \mbox{ for } N=3,4; \\
\cC_3 \times \cC_3\,.
\end{array}
$$
\end{theorem}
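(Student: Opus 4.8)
The plan is to prove the classification by reducing the problem over the infinite field $\Q(2^\infty)$ to the much better understood torsion of $E$ and of its quadratic twists over $\Q$, and then to treat the $2$-primary part by a direct analysis of the mod-$2^k$ Galois action. The starting observation is that $\Gal(\Q(2^\infty)/\Q)$ is an (infinite) elementary abelian $2$-group, so a torsion point $P \in E(\Qbar)$ lies in $E(\Q(2^\infty))$ precisely when its field of definition $\Q(P)$ is finite over $\Q$ with Galois closure having Galois group an elementary abelian $2$-group. Since $E(\Q(2^\infty))_{\tors}$ is stable under $\Gal(\Q(2^\infty)/\Q)$ and is a torsion abelian group, it suffices to determine its $p$-primary component $E(\Q(2^\infty))[p^\infty]$ for each prime $p$ and then recombine.

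First I would exploit the twisting trick. For a squarefree integer $d$, the curves $E$ and $E_d$ become isomorphic over $\Q(\sqrt d) \subseteq \Q(2^\infty)$, so all quadratic twists of $E$ are isomorphic over $\Q(2^\infty)$; consequently every torsion point of every $E_d$ that is rational over a multiquadratic field contributes to $E(\Q(2^\infty))_{\tors}$. Over a quadratic field $\Q(\sqrt d)$ with nontrivial automorphism $\sigma$, the natural map $E(\Q(\sqrt d)) \to E(\Q)\oplus E_d(\Q)$ (via the $\sigma$-invariants and anti-invariants) has kernel and cokernel killed by $2$, so away from $p=2$ the odd torsion over each multiquadratic field is assembled from the odd torsion of $E$ and of its twists over $\Q$. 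By Mazur's theorem (the set $\Phi(1)$ of the introduction) these are tightly constrained.

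With this in hand the odd part is controlled as follows. The Weil pairing shows that if $E[p] \subseteq E(\Q(2^\infty))$ then $\mu_p \subseteq \Q(2^\infty)$, forcing the cyclic extension $\Q(\zeta_p)/\Q$ of degree $p-1$ to be elementary abelian of exponent $2$; hence $p \in \{2,3\}$ and the only non-cyclic odd group that can occur is $\cC_3 \times \cC_3$. For cyclic odd torsion of order $p^k$, a point of that order rational over some twist forces $j(E)$ to lie in the image of $X_1(p^k) \to X(1)$, and the availability of enough rational points (equivalently, the modular curve having genus $0$, which by Mazur happens only for the odd prime powers $3,5,7,9$) limits the odd cyclic orders to $1,3,5,7,9$; a short $j$-invariant intersection argument then shows that the only way two distinct odd primes can occur simultaneously over a biquadratic subfield is $3$ together with $5$, yielding $\cC_{15}$ as the sole mixed odd group. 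This produces exactly the odd groups $\cC_1,\cC_3,\cC_5,\cC_7,\cC_9,\cC_{15},\cC_3\times\cC_3$ of the statement.

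The genuinely hard part is the $2$-primary component, and I expect this to be the main obstacle: here the twisting trick no longer decouples things cleanly, so one must analyze the image of $\rho_{E,2^k}\colon G_\Q \to \mathrm{GL}_2(\Z/2^k)$ directly and decide for which subgroups of $E[2^k]$ the field of definition is elementary abelian of exponent $2$. The entry point is that $E[2] \subseteq E(\Q(2^\infty))$ if and only if the $2$-division polynomial $X^3+AX+B$ has a rational root, since an irreducible cubic generates a cyclic or $S_3$ extension, neither of which embeds in an exponent-$2$ group. One then climbs the $2$-power division polynomials, at each level testing whether the new coordinates generate only quadratic steps and tracking how the twists interact, a careful but finite enumeration bounding the $2$-power torsion and producing $\cC_2\times\cC_{2N}$ ($N\le 6,8$), $\cC_4\times\cC_{4N}$ ($N\le 4$) and $\cC_{2N}\times\cC_{2N}$ ($N=3,4$). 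Finally, to see the list is sharp one must exhibit an explicit elliptic curve over $\Q$ realizing each of the $20$ groups; this realizability step, together with the upper bounds coming from the Galois-image analysis, closes the classification.
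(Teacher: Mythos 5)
First, a point of reference: the paper does not prove this statement at all. It is imported verbatim as \cite[Theorem 2]{Fujita2005} (building on Laska--Lorenz \cite{Laska-Lorenz1985}), so your attempt has to be measured against Fujita's proof, not against anything in this paper. Your opening reductions are in fact the correct ones and coincide with that strategy: all quadratic twists of $E$ become isomorphic over $\Q(2^\infty)$, the invariant/anti-invariant map $E(\Q(\sqrt d))\to E(\Q)\oplus E_d(\Q)$ has kernel and cokernel killed by $2$, so the odd torsion over $\Q(2^\infty)$ is assembled from the odd rational torsion of the twists $E_d$, and the Weil pairing forces $\Q(\zeta_m)\subseteq\Q(2^\infty)$ whenever $E[m]\subseteq E(\Q(2^\infty))$. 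But past this point there are genuine gaps. Already in the odd part: your exclusions cover full $p$-torsion (via $\zeta_p$) and pairs of \emph{distinct} odd primes, which leaves $\cC_3\times\cC_9$ untouched --- one twist with a rational point of order $9$ and a different twist with a rational point of order $3$ would produce it, and the Weil pairing gives no obstruction since only $\zeta_3\in\Q(\sqrt{-3})$ is needed. Ruling this out, and likewise the pairs $\{3,7\}$, $\{5,7\}$, $\{3,9\}$, etc., is not a ``short $j$-invariant intersection argument''; the efficient route is to note that any such configuration would place $\cC_3\times\cC_9$ (resp.\ $\cC_{21}$, \dots) inside $E_d(K)$ for a single quadratic field $K$, contradicting the Kenku--Momose--Kamienny description of $\Phi(2)$ \cite{Kenku-Momose1988,Kamienny1992} --- an input your sketch never invokes. (Also, what does the work in your step limiting odd orders to $1,3,5,7,9$ is Mazur's theorem on rational torsion applied to the twists, not the genus-$0$ property of $X_1(p^k)$; the two should not be conflated.)

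The more serious gap is the $2$-primary and mixed part, which is where essentially all of the difficulty of the theorem sits, and where your plan contains no actual mechanism. The Weil pairing caps the \emph{full} $2$-power torsion at $E[8]$ (since $\mathrm{Gal}(\Q(\zeta_{16})/\Q)\cong\cC_4\times\cC_2$ is not of exponent $2$), but it says nothing about a cyclic point of order $32$, about $\cC_2\times\cC_{14}$, about why $\cC_2\times\cC_{2N}$ stops at $N\le 6,8$, or about which groups $\cC_4\times\cC_{4N}$ and $\cC_{2N}\times\cC_{2N}$ actually occur. ``Climbing the $2$-power division polynomials'' with ``a careful but finite enumeration'' is not finite unless you first produce an a priori bound on the exponent of the $2$-part over $\Q(2^\infty)$, and no such bound is offered anywhere in the proposal. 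Producing it is precisely the content of the Laska--Lorenz/Fujita arguments: descent through $2$-isogenies, reduction of a point over a multiquadratic field to points over quadratic fields (equivalently, to rational points on twists), and then the quadratic-field classification together with rational-point computations on the relevant modular curves. As it stands, your text is a correct opening reduction followed by a placeholder exactly where the core of the theorem ought to be, so it cannot be accepted as a proof.
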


In the same paper one can find the following useful result:

\begin{proposition}\label{FJNT2}
\cite[Proposition 11]{Fujita2005} Let $E$ be an elliptic curve over $\Q$ such that $E(\Q)_{\tors}$ is cyclic. Then $\cC_8 \times \cC_8 \nleq E({\mathbb Q}(2^\infty))_{\tors}$.
\end{proposition}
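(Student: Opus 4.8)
The plan is to translate the statement into a question about the mod-$8$ Galois representation and then reduce it to a finite computation in $\mathrm{GL}_2(\Z/8\Z)$. First I would observe that $\cC_8 \times \cC_8 \leq E(\Q(2^\infty))_{\tors}$ holds if and only if $E[8] \subseteq E(\Q(2^\infty))$, i.e. $\Q(E[8]) \subseteq \Q(2^\infty)$. Since $\Gal(\Q(2^\infty)/\Q)$ is an elementary abelian $2$-group, this is equivalent to asking that the image $G$ of the mod-$8$ representation $\rho_{E,8} \colon \Gal(\Qbar/\Q) \to \mathrm{GL}_2(\Z/8\Z)$ be abelian of exponent dividing $2$. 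The problem thus becomes: among the abelian exponent-$2$ subgroups of $\mathrm{GL}_2(\Z/8\Z)$ that can occur as such an image, show that none is compatible with $E(\Q)_{\tors}$ cyclic, which I will phrase as the contrapositive.

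Next I would analyze the reduction of $G$ modulo $2$. Writing $M = I + N$ for $M \in G$, the relation $M^2 = I$ gives $N^2 = -2N$, so $N$ is nilpotent modulo $2$; hence each $\bar M \in \mathrm{GL}_2(\mathbb{F}_2) \cong S_3$ is either the identity or a transvection. Because $G$ is abelian and two distinct transvections in $S_3$ do not commute, the mod-$2$ image is either trivial or cyclic of order $2$. If it is trivial, then $E[2] \subseteq E(\Q)$, so $\cC_2 \times \cC_2 \subseteq E(\Q)_{\tors}$ and $E(\Q)_{\tors}$ is not cyclic; this already settles that case.

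The remaining, and genuinely delicate, case is when the mod-$2$ image has order $2$. Here I would exploit the determinant. The Weil pairing identifies $\det \circ \rho_{E,8}$ with the mod-$8$ cyclotomic character, which is surjective because $[\Q(\zeta_8):\Q] = 4$; hence $\det(G) = (\Z/8\Z)^\times$, a group of order $4$. On the other hand, after conjugating so that the mod-$2$ image is generated by the standard transvection $T$, I would show that $G$ contains an involution $U$ reducing to $T$, that every involution reducing to $T$ has the shape $\left(\begin{smallmatrix} p & q \\ 0 & -p \end{smallmatrix}\right)$ with $p$ a unit and $q$ odd, and therefore has determinant $-p^2 = -1$. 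Since $G$ is abelian, every element of $G$ commutes with $U$; a centralizer computation then shows that an involution commuting with $U$ and reducing to the identity modulo $2$ necessarily has determinant $1$ — the point being that the odd entry $q$ of $U$ obstructs commutation with the involutions of determinant $3$ or $5$. Consequently $\det(G) \subseteq \{1,-1\}$ has order at most $2$, contradicting $|\det(G)| = 4$.

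The main obstacle is precisely this last step: ruling out the order-$2$ mod-$2$ image. It is a finite check in $\mathrm{GL}_2(\Z/8\Z)$, but one has to organize the involutions commuting with $U$ carefully, splitting according to whether the mod-$2$ reduction of $M - I$ is $0$, the identity, or a rank-one idempotent, and tracking their determinants modulo $8$ rather than merely modulo $2$ or $4$. Once this determinant dichotomy is established, the contradiction with the surjectivity of the cyclotomic character is immediate, and combining both cases proves that $E(\Q)_{\tors}$ cyclic forces $\cC_8 \times \cC_8 \nleq E(\Q(2^\infty))_{\tors}$.
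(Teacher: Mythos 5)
Your proposal is correct, but there is nothing in the paper itself to compare it with: the paper does not prove this proposition at all, it imports it verbatim as \cite[Proposition 11]{Fujita2005}, and the proof in that reference rests on the machinery Fujita develops there. Your argument is self-contained and, after checking, sound. The opening translation ($\cC_8\times\cC_8\leq E(\Q(2^\infty))_{\tors}$ iff $\Q(E[8])\subseteq\Q(2^\infty)$ iff the mod-$8$ image $G$ is abelian of exponent dividing $2$) is justified by Kummer theory in one direction and by passing to a quotient of $\Gal(\Q(2^\infty)/\Q)$ in the other, and the mod-$2$ analysis via $\mathrm{GL}_2(\mathbb{F}_2)\cong S_3$ is fine. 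The two computational claims you state without full detail are in fact correct, and it is worth recording why: (a) if $M=\left(\begin{smallmatrix} a & b \\ c & d\end{smallmatrix}\right)$ is an involution reducing to $T$, then $b$ is odd, so the off-diagonal entries of $M^2=I$ force $a+d\equiv 0\pmod 8$, and the diagonal entries give $bc\equiv 1-a^2\equiv 0\pmod 8$ because odd squares are $1$ modulo $8$; hence $c\equiv 0$, $d=-a$, and $\det M=-a^2\equiv -1\pmod 8$, which is exactly your asserted shape and determinant; (b) since the upper-right entry of $U$ is a unit, the centralizer of $U$ in $\mathrm{GL}_2(\Z/8\Z)$ is precisely $\{\alpha I+\beta U\}$, an involution there satisfies $2\alpha\beta\equiv 0$ and $\alpha^2+\beta^2\equiv 1\pmod 8$, and reduction to $I$ modulo $2$ makes $\beta$ even, hence $\beta\equiv 0\pmod 4$ and $\det=\alpha^2-\beta^2\equiv 1\pmod 8$. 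So $\det(G)\subseteq\{\pm 1\}$, contradicting the surjectivity of the mod-$8$ cyclotomic character. Beyond being independent of Fujita's apparatus (so it would make the paper self-contained at this point), your route proves the slightly stronger statement that $\cC_8\times\cC_8\leq E(\Q(2^\infty))_{\tors}$ forces $\cC_2\times\cC_2\leq E(\Q)_{\tors}$, which is consistent with Theorem \ref{FAA1}, where $\cC_8\times\cC_8$ occurs only for rational torsion $\cC_2\times\cC_2$ or $\cC_2\times\cC_4$.
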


A classical result which could be found, for instance, in \cite[Corollary 8.1.1]{Silverman2009} is the following:

\begin{proposition}\label{nthroot}
Let $E$ be an elliptic curve over a number field $L$. If $\cC_m \times \cC_m = E[m] \le E(L)$, then $L$ contains the cyclotomic field generated by the $m$--th roots of unity.
\end{proposition}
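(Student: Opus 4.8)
The plan is to deduce the statement from the fundamental properties of the Weil pairing. Recall that for an elliptic curve $E$ over a field of characteristic prime to $m$ (which holds here, as $L$ has characteristic zero) one has $E[m] \cong \cC_m \times \cC_m$, together with the Weil pairing
$$
e_m \colon E[m] \times E[m] \longrightarrow \mu_m,
$$
where $\mu_m \subset \overline{L}^\times$ denotes the group of $m$--th roots of unity. This pairing is bilinear, alternating, non--degenerate and --- crucially --- Galois--equivariant, in the sense that $e_m(P^\sigma, Q^\sigma) = e_m(P,Q)^\sigma$ for every $\sigma \in \Gal(\overline{L}/L)$ and all $P,Q \in E[m]$. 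All of these facts are standard and can be cited from, e.g., \cite{Silverman2009}.

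Next I would exploit the hypothesis $E[m] \le E(L)$, which says precisely that every point of $E[m]$ is fixed by $\Gal(\overline{L}/L)$, i.e. $P^\sigma = P$ for all $P \in E[m]$ and all $\sigma$. Combining this with Galois--equivariance gives, for any $P, Q \in E[m]$ and any $\sigma \in \Gal(\overline{L}/L)$,
$$
e_m(P,Q)^\sigma = e_m(P^\sigma, Q^\sigma) = e_m(P,Q),
$$
so each value $e_m(P,Q)$ is fixed by the whole absolute Galois group of $L$ and therefore lies in $L$.

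It then remains to produce a primitive $m$--th root of unity among these values. Here I would invoke non--degeneracy: since $E[m]$ is the \emph{full} $m$--torsion (that is, $E[m] = \cC_m \times \cC_m$, not merely a subgroup), the image of $e_m$ is all of $\mu_m$. Hence there exist $P, Q \in E[m]$ with $e_m(P,Q) = \zeta_m$ a primitive $m$--th root of unity. By the previous paragraph $\zeta_m \in L$, and therefore $L$ contains $\Q(\zeta_m)$, the cyclotomic field generated by the $m$--th roots of unity, as claimed.

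The argument is short, and the only point that genuinely requires care is the surjectivity of the Weil pairing onto the whole of $\mu_m$: this is exactly where the assumption that $E[m]$ equals the full $m$--torsion $\cC_m \times \cC_m$ (rather than a proper subgroup) enters, since on a cyclic subgroup the alternating pairing would be trivial and no root of unity would be forced into $L$. I expect this to be the main --- and essentially only --- subtlety, and it is handled entirely by the standard non--degeneracy of the Weil pairing.
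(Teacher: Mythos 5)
Your proof is correct and is essentially the same argument as the paper's source: the paper does not prove this statement itself but cites \cite[Corollary 8.1.1]{Silverman2009}, whose proof is exactly this Weil-pairing argument (Galois equivariance plus the fact that non-degeneracy of the alternating pairing on the full $m$--torsion forces a primitive $m$--th root of unity into the image, hence into $L$). Your closing remark correctly identifies the one point of substance — surjectivity onto $\mu_m$ — and your treatment of it is sound.
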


In another paper by Fujita \cite{Fujita2004}, the following two results can be found:

\begin{theorem}\label{FAA1}
\cite[Theorem 1]{Fujita2004} Let $E$ be an elliptic curve over $\Q$ such that $E(\Q)_{\tors}$ is non-cyclic.\\
\indent $\bullet$ If $E(\Q)_{\tors} = \cC_2 \times \cC_8$, then $E({\mathbb Q}(2^\infty))_{\tors} = \cC_4 \times \cC_{16}$.\\
\indent $\bullet$ If $E(\Q)_{\tors} = \cC_2 \times \cC_6$, then $E({\mathbb Q}(2^\infty))_{\tors} = \cC_4 \times \cC_{12}$.\\
\indent $\bullet$ If $E(\Q)_{\tors} = \cC_2 \times \cC_4$, then $E({\mathbb Q}(2^\infty))_{\tors} \in \left\{ \cC_4 \times \cC_8, \, \cC_8 \times \cC_8 \right\}$.\\
\indent $\bullet$ If $E(\Q)_{\tors} = \cC_2 \times \cC_2$, then $E({\mathbb Q}(2^\infty))_{\tors} \in \left\{ \cC_4 \times \cC_4, \, \cC_4 \times \cC_8, \, \cC_8 \times \cC_8, \, \cC_4 \times \cC_{12}, \, \cC_4 \times \cC_{16} \right\}$.\\
\end{theorem}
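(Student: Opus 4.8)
The plan is to bound $E(\Q(2^\infty))_{\tors}$ from below and from above and then match against Theorem \ref{FJNT1}. Write $F=\Q(2^\infty)$. Since $E(\Q)_{\tors}$ is non-cyclic we have $\cC_2\times\cC_2=E[2]\le E(\Q)$, and the $2$--rank of any torsion subgroup of an elliptic curve is at most $2$. The engine of the whole argument is that $E[4]\le E(F)$ in all four cases. Indeed, writing $E:y^2=(x-e_1)(x-e_2)(x-e_3)$ with $e_1,e_2,e_3\in\Q$, the classical $2$--descent criterion says that $(e_i,0)\in 2E(L)$ precisely when $e_i-e_j$ and $e_i-e_k$ are squares in $L$. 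As each $\sqrt{e_i-e_j}$ lies in $F$, every $2$--torsion point is twice an $F$--rational point, and the three resulting points of order $4$ generate $E[4]$. Hence $\cC_4\times\cC_4\le E(F)_{\tors}$.

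Confronting this with Theorem \ref{FJNT1}, I note that among the twenty admissible groups exactly five contain $\cC_4\times\cC_4$, namely $\cC_4\times\cC_4$, $\cC_4\times\cC_8$, $\cC_4\times\cC_{12}$, $\cC_4\times\cC_{16}$ and $\cC_8\times\cC_8$. Thus for every non-cyclic $G$ the group $E(F)_{\tors}$ is one of these five; in particular the case $G=\cC_2\times\cC_2$ is already finished, since this is exactly the list in the statement.

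Next I would isolate the odd part, which only distinguishes $\cC_4\times\cC_{12}$ (odd part $\cC_3$) from the other four candidates (trivial odd part). The $3$--part of each of the five candidates is cyclic, so if $\cC_3\le E(F)_{\tors}$ then $E(F)_{\tors}[3]=\langle P\rangle$ is a $\Gal(\overline{\Q}/\Q)$--stable line; because Galois acts on it through a character into $\mathbb{F}_3^{\ast}\cong\{\pm1\}$, the point $P$ is already defined over a field $K$ with $[K:\Q]\le 2$, so $\cC_3\le E(K)_{\tors}$. Now Theorem \ref{teo1} decides matters: when $G=\cC_2\times\cC_6$ the curve has a rational point of order $3$, forcing the odd part to be $\cC_3$ and hence $E(F)_{\tors}=\cC_4\times\cC_{12}$; when $G\in\{\cC_2\times\cC_4,\cC_2\times\cC_8\}$ no group in $\Phi_\Q(2,G)$ has a point of order $3$, so no such $K$ can exist, the odd part is trivial, and $\cC_4\times\cC_{12}$ is discarded. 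The complementary prime $5$ never intervenes, since $\cC_5\le E(F)$ would force $\mu_5\subset F$ by Proposition \ref{nthroot}, which is false.

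It remains to pin down the exponent of the $2$--part for $G=\cC_2\times\cC_4$ (candidates $\cC_4\times\cC_4$, $\cC_4\times\cC_8$, $\cC_8\times\cC_8$, $\cC_4\times\cC_{16}$) and for $G=\cC_2\times\cC_8$ (candidates $\cC_4\times\cC_8$, $\cC_4\times\cC_{16}$, $\cC_8\times\cC_8$, after discarding those without a point of order $8$). For this I would pass to the Kubert parametrization of the universal curve carrying the prescribed rational torsion, write the generator $P$ of the cyclic factor explicitly, and solve $2Q=P$: verifying that the coordinates of $Q$ generate a multiquadratic extension shows that $P$ is halvable inside $F$, producing a point of order $8$ (for $\cC_2\times\cC_4$) or $16$ (for $\cC_2\times\cC_8$); and, for $\cC_2\times\cC_4$, checking that the square class required for the next halving is non-trivial in $F$ rules out order $16$. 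This explicit square-class bookkeeping in the function field of the parametrizing curve is the step I expect to be the main obstacle; once it is carried out, Theorem \ref{FJNT1} leaves exactly $\cC_4\times\cC_{16}$ for $G=\cC_2\times\cC_8$ and exactly $\{\cC_4\times\cC_8,\cC_8\times\cC_8\}$ for $G=\cC_2\times\cC_4$, with the two latter possibilities separated by whether the second generator also climbs to order $8$.
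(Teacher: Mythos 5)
You should first note that the paper does not prove this statement at all: it is quoted verbatim from Fujita's 2004 paper, so there is no internal proof to compare with, and any blind attempt must either reproduce Fujita's argument or derive the result from the other quoted theorems. Your first three steps do the latter correctly: since non-cyclic rational torsion forces $E[2]\le E(\Q)$, Lemma \ref{2dmknapp} applied over $F=\Q(2^\infty)$ (where every rational number is a square) halves all three points of order $2$, and the resulting points generate $E[4]$, so $\cC_4\times\cC_4\le E(F)_{\tors}$; exactly five of the twenty groups in Theorem \ref{FJNT1} contain $\cC_4\times\cC_4$, which already finishes the bullet for $\cC_2\times\cC_2$; and your Galois-character argument (a cyclic, Galois-stable $E(F)[3]$ descends to a field of degree at most $2$, contradicting Theorem \ref{teo1} for $G=\cC_2\times\cC_4$ or $\cC_2\times\cC_8$) correctly isolates $\cC_4\times\cC_{12}$, settling the bullet for $\cC_2\times\cC_6$ as well. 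Two blemishes here: your aside on the prime $5$ misuses Proposition \ref{nthroot}, which requires all of $E[5]$ to be rational over $F$, not a single point of order $5$ (harmless, since no candidate group has $5$-torsion, so the remark is superfluous); more seriously, Theorem \ref{FJNT1} is Fujita's 2005 classification, whose proof in the non-cyclic case rests on the very 2004 theorem you are proving, so your derivation is circular as a matter of the literature, even if it is consistent within this paper's framework.

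The genuine gap is in the first and third bullets, and you acknowledge it yourself. For $G=\cC_2\times\cC_8$ you must produce a point of order $16$ in $E(F)$ --- i.e. show that for \emph{every} rational curve with this torsion the rational point of order $8$ is divisible by $2$ over a multiquadratic field --- since only this excludes both $\cC_4\times\cC_8$ and $\cC_8\times\cC_8$ and leaves $\cC_4\times\cC_{16}$. For $G=\cC_2\times\cC_4$ you must show the rational point of order $4$ is always divisible by $2$ in $F$ (excluding $\cC_4\times\cC_4$) and that no point of order $16$ can ever arise (excluding $\cC_4\times\cC_{16}$); note the latter requires ruling out halving for every point of order $8$ in $E(F)$, not just for one generator. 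You describe the needed computations --- Kubert/Tate parametrization, explicit quartic roots, square-class bookkeeping in the style of Proposition \ref{newprop} and Lemmas \ref{2dm4}--\ref{2dm8} --- but you expressly defer them as ``the main obstacle.'' That deferred computation is precisely the substance of Fujita's theorem in these two cases; without it, two of the four bullets remain unproven.
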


\begin{proposition}\label{ClEx}
\cite[Final Remark]{Fujita2004} The minimal $d$ for which the following groups can be realized as $E(L_{d} )_{\tors}$ with some elliptic curve $E$ defined over $\Q$, having non--cyclic rational torsion, and some polyquadratic field $L_{d}$ with $[L_d: \Q] = 2^d$, is:
\begin{enumerate}
\item $d= 4$ for $\cC_4 \times \cC_{16}$.
\item $d = 3$ for $\cC_4 \times \cC_{12}$.
\item $d = 4$ for $\cC_8 \times \cC_8$.
\item For all other types, we have $d_m=2$.
\end{enumerate}
\end{proposition}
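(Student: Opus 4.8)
The plan is to prove Proposition~\ref{ClEx} by separately establishing lower bounds (the groups cannot be realized in a polyquadratic field of smaller degree) and matching upper bounds (explicit examples attaining each minimal $d$). The upper bounds are the easy direction: for each target group it suffices to exhibit one elliptic curve $E/\Q$ with non-cyclic rational torsion together with an explicit polyquadratic field $L_d$ of the stated degree realizing the group, which one can read off from the twisting and field-of-definition data. So the substance of the argument lies entirely in the lower bounds, and I would organize those by case according to Theorem~\ref{FAA1}.

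First I would treat $\cC_4 \times \cC_{16}$. By Theorem~\ref{FAA1} this only arises when $E(\Q)_{\tors} = \cC_2 \times \cC_8$, and it equals $E(\Q(2^\infty))_{\tors}$. The key observation is that $\cC_4 \times \cC_{16}$ contains $E[4] = \cC_4 \times \cC_4$, so by Proposition~\ref{nthroot} any field $L_d$ realizing this group must contain $\Q(\zeta_4) = \Q(i)$. Passing from $\cC_2\times\cC_8$ to $\cC_4\times\cC_{16}$ forces one to adjoin, over $\Q$, the full $4$-torsion (gaining the factor of $i$ and doubling the first factor) and additionally to realize the $16$-torsion point; tracking the degrees of these successive quadratic steps inside $\Q(2^\infty)$ should show that at least four independent square roots are needed, i.e.\ $d \geq 4$. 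The analogous but shorter count handles $\cC_4 \times \cC_{12}$: here $E[4]\le \cC_4\times\cC_{12}$ again forces $\Q(i)\subseteq L_d$, and combining this with the field generated by the extra $3$-power and $2$-power torsion over the rational $\cC_2\times\cC_6$ structure gives the bound $d \geq 3$; the matching example shows $d=3$ is attained.

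For $\cC_8 \times \cC_8$ the cleanest route uses Proposition~\ref{FJNT2} together with Proposition~\ref{nthroot}. Since $\cC_8\times\cC_8 \supseteq E[8]=\cC_8\times\cC_8$, realizing it requires $\Q(\zeta_8)\subseteq L_d$, and $\Q(\zeta_8)$ is itself a biquadratic (degree $4$) field $\Q(i,\sqrt2)$, already giving $d\ge 2$; the point is to push this to $d\ge 4$. The relevant case of Theorem~\ref{FAA1} is $E(\Q)_{\tors}\in\{\cC_2\times\cC_4,\cC_2\times\cC_2\}$, and I would argue that climbing from the rational torsion up to full $8\times 8$ torsion requires adjoining $\zeta_8$ \emph{and} two further independent square roots coming from the $x$- and $y$-coordinates of the new $8$-torsion points, no three of the required quadratic extensions being forced to coincide. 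The main obstacle, and the part requiring genuine care rather than a degree count, is precisely verifying this independence: one must rule out any coincidence among the generating square roots that would collapse $L_d$ into a degree-$2^3$ field, and this is where I expect to need the detailed coordinate analysis of the relevant parametrized families (the argument attributed to Fujita in the acknowledgements). Finally, item~(4) — that every other group in Theorem~\ref{FJNT1} arising from non-cyclic rational torsion is already realized with $d=2$ — follows by checking, group by group against Theorem~\ref{FAA1} and the explicit quadratic examples underlying Theorem~\ref{teo1}, that a single quadratic field suffices, so that no further lower bound beyond the trivial $d\ge 2$ applies.
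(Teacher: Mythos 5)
There is an important framing point first: the paper never proves Proposition~\ref{ClEx} at all. It is imported verbatim from Fujita's Final Remark in \cite{Fujita2004}, and the acknowledgements record that Fujita explained his argument to the authors privately. So there is no internal proof to compare yours against, and your attempt must stand on its own --- which it does not, because the lower bounds, which are the entire content of the statement, are never actually established. Every place where the real work should happen is a placeholder: ``tracking the degrees \ldots should show that at least four independent square roots are needed'', ``I would argue \ldots no three of the required quadratic extensions being forced to coincide'', ``this is where I expect to need the detailed coordinate analysis''. Proving that the required square roots cannot collapse for \emph{every} curve in the relevant family is precisely the hard part: one has to parametrize the curves (Tate normal form, or the Kubert--Elkies model) and reduce each potential coincidence to rational points on explicit auxiliary elliptic curves of rank zero, exactly as this paper does elsewhere (compare its proof that $\cC_4\times\cC_{12}\leq E(L)_{\tors}$ is impossible for triquadratic $L$ when $E(\Q)_{\tors}=\cC_2\times\cC_2$, which runs through the curves \texttt{24a1} and \texttt{48a1}). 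None of that analysis appears in your sketch, so no lower bound is proved.

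Beyond the missing computations, the skeleton itself has concrete defects. First, the case analysis via Theorem~\ref{FAA1} is incomplete: $\cC_4\times\cC_{16}$ and $\cC_4\times\cC_{12}$ also occur as $E(\Q(2^\infty))_{\tors}$ for curves with $E(\Q)_{\tors}=\cC_2\times\cC_2$, not only for $\cC_2\times\cC_8$ resp.\ $\cC_2\times\cC_6$, so the lower bounds must also be proved for those families; your plan ignores them. Second, Proposition~\ref{FJNT2} concerns curves with \emph{cyclic} rational torsion and is vacuous under the non-cyclic hypothesis of the proposition, so it cannot be an ingredient in the $\cC_8\times\cC_8$ case. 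Third, your treatment of item (4) is incoherent: exhibiting ``a single quadratic field'' realizing a group would give $d=1$, not $d=2$, and in fact the paper's own Table~\ref{tablagrande} (curve \texttt{195a3}, with $E(\Q)_{\tors}=\cC_2\times\cC_4$ and $E(\Q(\sqrt{-1}))_{\tors}=\cC_4\times\cC_4$) shows a quadratic realization of $\cC_4\times\cC_4$ by a curve with non-cyclic rational torsion. Hence the stated value $d_m=2$ can only be correct under the reading that $E(L_d)_{\tors}$ must equal the full stable group $E(\Q(2^\infty))_{\tors}$, and under that reading item (4) also requires a nontrivial impossibility argument at degree $2$, which your sketch does not even contemplate.
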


\section{On $2$--divisibility}

In this section we are going to use two methods that allow us to decide when there exists a point (or where to look for it) which divides by two a given point of some order. The first method is classical in the literature of elliptic curves \cite[Theorem 4.2]{Knapp1992}. It allows us to decide if a point defined over a number field $L$ containing $\Q(E[2])$ is half a point over $L$ too.

\begin{lemma}\label{2dmknapp}
Let $E$ be an elliptic curve defined over a number field $L$ given by
$$
E:\, Y^2=(X-\alpha)(X-\beta)(X-\gamma),
$$
with $\alpha,\beta,\gamma \in L$. For $P=(x_0,y_0)\in E(L)$, there exists $Q\in E(L)$ such that $2Q=P$ if and only if $x_0-\alpha,x_0-\beta$ and $x_0-\gamma$ are all squares in $L$.
\end{lemma}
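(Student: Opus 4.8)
The statement is the classical 2-descent criterion: for a curve with full rational 2-torsion (all three roots $\alpha,\beta,\gamma \in L$), a point $P=(x_0,y_0)$ is twice another $L$-rational point exactly when each of $x_0-\alpha$, $x_0-\beta$, $x_0-\gamma$ is a square in $L$.

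Let me think about how I'd prove this.

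The condition is that there exists $Q$ with $2Q = P$. The key tool is the doubling formula, or equivalently, descent via the three 2-isogenies / the Kummer map.

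**Approach 1 (via the explicit doubling formula):** Write $Q = (x_1, y_1)$. The doubling formula gives $x$-coordinate of $2Q$ in terms of $x_1, y_1$. Setting $2Q = P = (x_0, y_0)$ and solving... this seems messy.

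**Approach 2 (the natural descent approach, which is what the criterion actually encodes):** The three functions $x - \alpha$, $x - \beta$, $x - \gamma$ relate to the connecting maps in Galois cohomology / the $2$-descent. Specifically, consider the map
$$E(L) \to L^*/(L^*)^2 \times L^*/(L^*)^2$$
sending $P \mapsto (x_0 - \alpha, x_0 - \beta)$ (with $x_0 - \gamma$ determined since the product $(x_0-\alpha)(x_0-\beta)(x_0-\gamma) = y_0^2$ is a square). The kernel of this map is exactly $2E(L)$. So $P \in 2E(L)$ iff all three differences are squares.

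Let me reconstruct this more carefully.
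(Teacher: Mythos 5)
Your proposal identifies the right classical framework --- the $2$-descent (Kummer) map is exactly what underlies this criterion, and in fact the paper gives no proof at all, simply citing Knapp [Theorem 4.2], so a complete argument along these lines would be perfectly acceptable. The problem is that what you wrote is not yet a proof: the single sentence carrying all the weight, ``the kernel of this map is exactly $2E(L)$,'' is precisely a restatement of the lemma, so asserting it is circular, and the proposal ends at ``let me reconstruct this more carefully'' without ever doing so. Writing $\{e_1,e_2,e_3\}=\{\alpha,\beta,\gamma\}$, the easy inclusion $2E(L)\subseteq\ker$ does follow from the duplication formula, since $2Q=P$ forces
$$
x_0-e_i=\left(\frac{(x_Q-e_i)^2-(e_i-e_j)(e_i-e_k)}{2y_Q}\right)^2\in (L^*)^2,
$$
but the reverse inclusion --- given that $x_0-\alpha$, $x_0-\beta$, $x_0-\gamma$ are squares, produce $Q\in E(L)$ with $2Q=P$ --- is the actual substance of the lemma and is entirely missing. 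To fill it you need either the explicit construction (write $x_0-e_i=t_i^2$ with $t_i\in L$ and check that, for a suitable choice of signs of the $t_i$, the point with $x$-coordinate $x_Q=x_0+t_1t_2+t_1t_3+t_2t_3$ lies in $E(L)$ and doubles to $P$), or an honest cohomological proof that the connecting map $E(L)/2E(L)\to L^*/(L^*)^2\times L^*/(L^*)^2$ is injective.

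Two further points need care if you pursue the descent route. First, well-definedness: your device ``$x_0-\gamma$ is determined since the product is $y_0^2$'' breaks down exactly at the $2$-torsion points, where $y_0=0$ and the product vanishes; there the map must be redefined (e.g.\ at $P=(\alpha,0)$ the first coordinate is taken to be $(\alpha-\beta)(\alpha-\gamma)$ modulo squares), and the lemma still has to be verified for such $P$, since the statement includes them --- this matters for the paper, which applies the criterion to $2$-torsion points in Proposition \ref{2dmnc}. Second, the fact that the map is a group homomorphism is itself a standard but nontrivial computation that cannot simply be invoked if you are reproving the kernel statement from scratch. None of this is fatal --- it is the classical proof, essentially the one in Knapp --- but as submitted the proposal is a plan with its key step assumed rather than proved.
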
 

For our concerns, this will apply specifically to the following situation:

\begin{proposition}\label{2dmnc}
Assume we have an elliptic curve 
$$
E: \ Y^2=X(X-A)(X-B), \quad A,B  \in \Q
$$
and $\cC_2\times\cC_2 \leq E(\Q)_{\tors}$ and there are no points of order $4$ in $E(\Q)$. Then, there are $1$, $2$ or $3$ quadratic fields $K$ with $\cC_2\times\cC_4\leq E(K)_{\tors}$. All three cases can appear.
\end{proposition}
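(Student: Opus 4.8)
The plan is to reduce the whole question to the $2$--divisibility of the three nontrivial $2$--torsion points $T_1=(0,0)$, $T_2=(A,0)$, $T_3=(B,0)$, all of which are rational, and to count over how many quadratic fields each one can be halved. I would first observe that, for a quadratic $K$, one has $\cC_2\times\cC_4\le E(K)_{\tors}$ if and only if $E(K)$ contains a point $Q$ of order $4$: indeed $E(\Q)_{\tors}$ already contains $\cC_2\times\cC_2$, so any order--$4$ point produces $\cC_2\times\cC_4$, and conversely $\cC_2\times\cC_4$ contains such a $Q$. Since $2Q$ then has order $2$, it must be one of $T_1,T_2,T_3$, so that acquiring $\cC_2\times\cC_4$ over $K$ is equivalent to at least one $T_i$ being divisible by $2$ in $E(K)$. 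Because the three roots $0,A,B$ are rational we have $\Q(E[2])=\Q\subseteq K$, so Lemma \ref{2dmknapp} applies with $(\alpha,\beta,\gamma)=(0,A,B)$ and says exactly when this happens: $T_1$ is halved over $K$ iff $-A$ and $-B$ are squares in $K$; $T_2$ iff $A$ and $A-B$ are squares in $K$; and $T_3$ iff $B$ and $B-A$ are squares in $K$ (the difference vanishing at each $T_i$ being automatically a square).

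Next I would translate these conditions into the $\mathbb{F}_2$--vector space $\Q^\times/(\Q^\times)^2$. Writing $[r]$ for the class of $r$ and $K=\Q(\sqrt{d})$ with $d$ squarefree, a rational $r$ is a square in $K$ exactly when $[r]\in\{[1],[d]\}$. Thus, for a fixed $i$, the halving condition for $T_i$ is that its two associated classes both lie in $\{[1],[d]\}$. The hypothesis that $E(\Q)$ has no point of order $4$ is precisely the statement that, for each $i$, these two classes are not both trivial. Consequently, if $T_i$ can be halved at all, the nontrivial class(es) among the pair pin down $[d]$ uniquely, and such a $d$ exists if and only if the two classes span a subspace of dimension at most $1$; when they are independent, no quadratic field halves $T_i$. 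The conclusion of this step is that each $T_i$ is $2$--divisible over \emph{at most one} quadratic field, which is moreover determined explicitly by $[A],[B],[A-B]$ and $[-1]$.

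From here the bound is immediate: there are only three points $T_1,T_2,T_3$, so the set of quadratic fields over which some $T_i$ becomes divisible by $2$ --- equivalently, the set of $K$ with $\cC_2\times\cC_4\le E(K)_{\tors}$ --- has at most three elements, where distinct $T_i$ may well determine the same field. So the genuine content of the statement is the upper bound together with the realizability of each of the values $1,2,3$. I would establish the latter by exhibiting explicit curves and reading the fields off from the criterion above. For instance $E:Y^2=X(X-2)(X-3)$ gives exactly one field: here $\{[-2],[-3]\}$ and $\{[2],[-1]\}$ are independent while $\{[3],[1]\}$ has dimension $1$, so only $T_3$ is halved, over $\Q(\sqrt{3})$. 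The curve $E:Y^2=X(X-2)(X-18)$ gives two fields: since $[A]=[B]=[2]$ and $[A-B]=[-1]$, the pairs for $T_1$ and $T_3$ have dimension $1$ while that for $T_2$ does not, so $T_1$ and $T_3$ are halved over the two distinct fields $\Q(\sqrt{-2})$ and $\Q(\sqrt{2})$. Finally $E:Y^2=X(X-3)(X+1)$ gives three: here $-B=1$ and $A-B=4$ are squares, forcing all three pairs to have dimension $1$, and one checks the three fields $\Q(\sqrt{-3})$, $\Q(\sqrt{3})$ and $\Q(\sqrt{-1})$ are distinct.

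In each example the only remaining task is routine: checking, by applying the \emph{same} divisibility conditions over $\Q$ itself, that there is no rational $4$--torsion (and that the rational torsion is exactly as stated), which in the notation above amounts to verifying that no pair of associated classes is identically trivial. I expect the one genuinely delicate point of the whole argument to be the bookkeeping of the second paragraph: one must use the ``no order $4$ over $\Q$'' hypothesis in just the right way, as the exact condition that forces the halving field of each $T_i$ to be unique \emph{and} a proper quadratic extension, so that the three points really contribute at most three distinct fields --- and, in the examples, precisely the fields claimed and no others.
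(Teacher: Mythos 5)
Your proof is correct and follows essentially the same route as the paper's: reduce to halving one of the three rational $2$--torsion points via Lemma \ref{2dmknapp}, observe that the no-rational-$4$-torsion hypothesis forces each point to determine at most one squarefree $D$ (hence at most three quadratic fields), and exhibit examples realizing $1$, $2$ and $3$. The only difference is presentational: you phrase the uniqueness argument in terms of classes in $\Q^{\times}/(\Q^{\times})^2$ and supply self-contained hand-checked examples, whereas the paper cites curves \texttt{33a1}, \texttt{17a2}, \texttt{15a2} from its computational table.
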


\begin{proof}
Assume that the elliptic curve has $\cC_2\times\cC_4\leq E(K)_{\tors}$, with $K = \Q(\sqrt{D})$. Let us first assume that the point who gets divided by two is $(0,0)$. That is, there is a certain $Q\in E(K)$ such that $2Q=(0,0)$. By the previous lemma $0, -A, -B$ are then squares in $K$. This amounts to the existence of $a,b\in\Q$ such that one of the mutually exclusive pairs of equalities holds:
$$
\{ -A=a^2D,\, -B=b^2\}\,\,\mbox{or} \,\, \{ -A=a^2,\, -B=b^2D\}\,\,\mbox{or}\,\, \{-A=a^2D,\, -B=b^2D\}. 
$$

Of these cases, there is only one possible squarefree $D$ satisfying the conditions. The same goes if the divided point is $(A,0)$ (change $\{A,B\}$ for $\{A,A-B\}$) and if it is $(B,0)$. All in all there can be $1$, $2$ or $3$ quadratic extensions where the torsion contains $\cC_2\times\cC_4$.

In Table $1$ (see the appendix for an explanation of the table) one can find an example for each of the three circumstances.
\end{proof}

The second technique is taken from Jeon et al. \cite{Jeon-Kim-Lee2013}. This method allows to find, given a point defined over a number field $F$, an extension $L/F$ and a point defined over $L$ such that it is half of the given point. 
\begin{proposition}\label{newprop}
Let $E$ be an elliptic curve defined over a number field $F$ given by the Weierstrass equation:
$$
E: \ Y^2= X^3+AX^2+BX+y_0^2,
$$
and $P=(0,y_0)\in E(F)$. Let $\alpha$ be a root of the quartic polynomial  
$$
q(x)=x^4-2Ax^2-8y_0x+A^2-4B.
$$
Then the point $Q=((\alpha^2-A)/2,\alpha (\alpha^2-A)/2-y_0)\in E(L)$, where $L=F(\alpha)$, and $2Q=P$.
\end{proposition}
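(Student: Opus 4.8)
The plan is to verify the statement directly, extracting $2Q=P$ from the chord-and-tangent description of the group law rather than from the explicit duplication polynomials. Write $f(X)=X^3+AX^2+BX+y_0^2$, so $-P=(0,-y_0)$, and set $x_Q=(\alpha^2-A)/2$ so that $Q=(x_Q,\alpha x_Q-y_0)$. The key observation is to introduce the line
\[
\ell:\ Y=\alpha X - y_0,
\]
which passes through $-P=(0,-y_0)$ (take $X=0$) and through $Q$ (take $X=x_Q$, since the $y$-coordinate of $Q$ is by definition $\alpha x_Q - y_0$). Thus $\ell$ is the secant/tangent line one would use to compute a sum of points collinear with $Q$ and $-P$, and the whole proof reduces to showing that $\ell$ is \emph{tangent} to $E$ at $Q$.

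To see this, I would substitute $Y=\alpha X-y_0$ into $Y^2=f(X)$. After cancelling the $y_0^2$ terms, the intersection is governed by
\[
X\Bigl(X^2+(A-\alpha^2)X+(B+2\alpha y_0)\Bigr)=0,
\]
where the factor $X$ accounts for the intersection point $-P=(0,-y_0)$. The quadratic factor controls the remaining two intersection points, and its discriminant is
\[
(A-\alpha^2)^2-4(B+2\alpha y_0)=\alpha^4-2A\alpha^2-8\alpha y_0+A^2-4B=q(\alpha).
\]
Since $\alpha$ is by hypothesis a root of $q$, this discriminant vanishes, so the quadratic is a perfect square $(X-x_0)^2$ with double root $x_0=(\alpha^2-A)/2=x_Q$. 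Hence $\ell$ meets $E$ with intersection divisor $2\,(Q)+(-P)$, i.e. $\ell$ is tangent to $E$ at $Q$ and passes through $-P$.

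By the group law on $E$, three points cut out by a line sum to $\cO$, so $Q+Q+(-P)=\cO$, that is $2Q=P$; and $Q\in E(L)$ holds automatically because $Q$ lies on both $\ell$ and $E$ and has coordinates in $L=F(\alpha)$. The computations are routine, so the only genuinely delicate points — the ones I would check most carefully — are the algebraic identity that the discriminant of the quadratic factor is \emph{exactly} $q(\alpha)$ (this is where the precise shape of $q$ is forced), the verification that the resulting double root is precisely the $x$-coordinate given for $Q$, and the bookkeeping of the sign convention in the group law. I would also note in passing that the degenerate configurations (e.g. $\ell$ vertical, or $Q$ a $2$-torsion point) do not occur here, since $\ell$ has finite slope and passes through $Q$ and $-P$ as stated.
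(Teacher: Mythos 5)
Your proof is correct, but note that there is no in-paper proof to compare it against: Proposition \ref{newprop} is stated in the paper as a quoted result from Jeon, Kim and Lee \cite{Jeon-Kim-Lee2013}, with no argument given; the paper's only gloss is the subsequent remark that $E$ is $F$-isomorphic to the quartic curve $v^2=q(u)$, which points to the two-covering/duplication-formula derivation used in the cited source. Your tangent-line argument is therefore a genuinely self-contained alternative, and it checks out: substituting $Y=\alpha X-y_0$ into the Weierstrass equation indeed yields $X\bigl(X^2+(A-\alpha^2)X+(B+2\alpha y_0)\bigr)=0$, the discriminant of the quadratic factor is exactly $(A-\alpha^2)^2-4(B+2\alpha y_0)=q(\alpha)=0$, so that factor is $(X-x_Q)^2$ with $x_Q=(\alpha^2-A)/2$, and the chord-tangent group law applied to the intersection divisor $2(Q)+(-P)$ gives $2Q=P$, with $Q\in E(L)$ clear from the formulas. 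One clarification on your closing remark: the degenerate configuration is not quite impossible. If $\alpha^2=A$ (which happens exactly when $P$ has order $3$ and $\alpha=-B/(2y_0)$), then $x_Q=0$, $Q=-P$, and $\ell$ is an inflectional tangent meeting $E$ in the divisor $3(-P)$; but your divisor statement and the conclusion $Q+Q+(-P)=\cO$, i.e.\ $2Q=P$, remain literally true in that case, so no separate treatment is needed. Likewise $Q$ can never be $2$-torsion, since a double root of the substituted cubic at $x_Q$ together with smoothness of $E$ forces $y_Q\neq 0$. What your route buys is a short, elementary proof independent of the literature; what the paper's citation buys is the quartic model $v^2=q(u)$, which it exploits immediately afterwards, via \cite[Appendix A.2]{Gonzalez-Jimenez2014}, to control the quadratic extensions over which $q$ acquires a root.
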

 
It is not difficult to check that the elliptic curve $E$ and the one defined by the quartic polynomial $q(x)$, $v^2=q(u)$, are isomorphic over $F$. Then, thanks to  \cite[Appendix A.2]{Gonzalez-Jimenez2014}, we know that $q(x)$ splits over a quadratic extension of $F$ for each $2$--torsion point of $E$ defined over $F$.

We will apply this procedure to points of even order $N$. Note that if $E(\Q)_{\tors}$ is cyclic and $P,P'$ are two generators of this cyclic group, then if there exist a number field $L$ and a point $Q \in E(L)$ with $2Q=P$, then there must also be some $Q' \in E(L)$ with $2Q'=P'$. That is, the $2$--divisibility holds for either all generators or for none of them.

\subsection{The case $N=2$} 

\begin{lemma}\label{2dm2}
Let 
$$
E:\ Y^2=X(X^2+AX+B)
$$ 
be an elliptic curve defined over $\Q$ with $E(\Q)_{\tors}= \cC_2$. Then, there exists a quadratic field $K$ with $\cC_4\leq E(K)_{\tors}$ if and only if $B=s^2$ for some $s\in\Q$. 

Moreover, $K=K_{\pm} := \Q (\sqrt{A\pm 2 s})$ in this situation and $K_+ \neq K_-$.
\end{lemma} 

\begin{proof}
Using Proposition \ref{newprop}, with the point $(0,0)$, we get the roots of the corresponding quartic polynomial $q(x)$ which are
$$
\pm \sqrt{A\pm 2\sqrt{B}}.
$$
A necessary and sufficient condition then for a point $Q$ to exist over a quadratic field, with $2Q=(0,0)$, is $B=s^2$ for a certain $s\in\Q$. Should this be the case, $Q \in E(K)[4]$, with $K = \Q(\sqrt{A\pm 2 s})$.

Please note that we have implicitly assumed that there are no points of order $2$ in $E(K')$ other than $(0,0)$ that could be divided by $2$ over any quadratic field $K'$. In fact, this must always be the case, as from \cite[Thm. 5 (ii)]{Gonzalez-Jimenez-Tornero2014}, $G = \cC_2$ implies $\cC_2 \times \cC_4 \not\leq E(K')_{\text{tors}}$ for any quadratic field $K'$.

Let us check $K_+\neq K_-$ for all $s$. Assume $K_+ = K_-$. Then, $A^2-4s^2$ is a rational square. Therefore, $X^2+AX+s^2$ has two different rational roots. That is, $C_2\times C_2 \leq E(\Q)$, which is a contradiction.
\end{proof}

\subsection{The cases $N=4,6,8$} 

$\;$\\

Let $N\ge 4$ be an integer. We are given a curve $E$ defined over a number field $L$ (for our purposes it will mostly be $\Q$, but the result is more general) and a point $P \in E(L)$ of order $N$, and then we take the Tate normal form of $E$:
$$
\mathcal T_{b,c}:\ Y^2+(1-c)XY-bY=X^3-bX^2,
$$
where $P = (0,0)$. Changing coordinates by means of 
$$
X\longmapsto X  \quad ,\quad Y \longmapsto Y+\frac{c-1}{2}x+\frac{b}{2}\,;
$$ 
we obtain a Weierstrass model:
$$
\mathcal T_{b,c}: \ Y^2=X^3+AX^2+BX+C,
$$
with
$$
A=\frac{(c-1)^2-4b}{4}, \quad B=\frac{b(c-1)}{2}, \quad C=\frac{b^2}{4}. 
$$
In particular $P=\left(0,-b/2\right)$. Then the quartic polynomial $q(x)$ which characterizes the existence of $Q$ such that $2Q=P$ (see Proposition \ref{newprop}) is now:
\begin{equation}\label{qq}
\begin{array}{rcl}
q(x) &=& x^4+\frac{1}{2}(-1 + 4b + 2c - c^2)x^2+4bx+ \\[1mm]
&& \qquad \quad + \frac{1}{16}(1 + 24b + 16b^2 - 4c - 16bc + 6c^2 - 8bc^2 - 4c^3 + c^4).
\end{array}
\end{equation}

The Tate normal form also has an important feature, as it parametrizes the different curves defined over the rationals with a common torsion structure \cite{Husemoller2004}. Precisely, if $\cC_N \leq E(\Q)$, there exists $t\in\Q$ such that $E$ is $\Q$-isomorphic to  $\mathcal T_{b,c}$ where:

\begin{itemize}
\item $c=0$ and $b=t$ if $N=4$; 
\item $c=t$ and $b=t^2+t$ if $N=6$;
\item $c=(2t-1)(t-1)/t$ and $b=(2t-1)(t-1)$ if $N=8$.
\end{itemize}

\begin{lemma}\label{2dm4}
Let $E$ be an elliptic curve defined over $\Q$ with $E(\Q)_{\tors}= \cC_4$. Let $t\in\Q$ such that $E$ is $\Q$--isomorphic to $\mathcal T_{t,0}$. Then, there exists a quadratic field $K$ with $E(K)_{\tors}= \cC_8$ if and only if $t=-s^2$ for some $s\in\Q$. 

Moreover, $K=K_{\pm} := \Q (\sqrt{1\pm 4 s})$ in this situation and $K_+ \neq K_-$.
\end{lemma}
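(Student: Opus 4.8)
The plan is to reduce the existence of a quadratic field $K$ with $E(K)_{\tors}=\cC_8$ to a factorization property of the quartic $q(x)$ attached to $P$ by Proposition \ref{newprop}, and then to read the fields $K_\pm$ off that factorization. First I would record the Weierstrass data of $\mathcal T_{t,0}$: with $c=0$, $b=t$ one has $A=(1-4t)/4$, $B=-t/2$, $C=t^2/4$, so $y_0^2=C$ is consistent, the point $P=(0,-t/2)$ has order $4$, and a direct doubling gives $2P=(t,0)$. Correspondingly the $2$--division cubic factors as $X^3+AX^2+BX+C=(X-t)\bigl(X^2+\tfrac14X-\tfrac{t}{4}\bigr)$, so the two non--rational points of order $2$ are defined over $\Q(\sqrt{1+16t})$, which is a \emph{genuine} quadratic field precisely because $E(\Q)_{\tors}=\cC_4$ carries a single rational point of order $2$ (equivalently $1+16t$ is not a square).

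Next I would set up the reduction. If $E(K)_{\tors}=\cC_8$ for some quadratic $K$, then its unique subgroup of order $4$ is $E(\Q)_{\tors}$, so a generator $Q$ of order $8$ satisfies $2Q=\pm P$; by the remark following Proposition \ref{newprop} it suffices to treat $2Q=P$. That proposition places $Q$ over $\Q(\alpha)$ for $\alpha$ a root of $q(x)$, and conversely $\alpha$ is recovered rationally from the coordinates of $Q$, so $\alpha\in K$. Thus the problem becomes: for which $t$ does $q(x)$ have a root of degree $2$ over $\Q$? A rational root would yield a point of order $8$ already over $\Q$, which is impossible; hence I need $q(x)$ to admit an irreducible quadratic factor over $\Q$, equivalently to split into two monic quadratics over $\Q$.

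I would then perform that factorization starting from \eqref{qq} with $c=0$, $b=t$. Writing $q(x)=(x^2+ax+b_1)(x^2-ax+c_1)$ — the linear coefficients are opposite since $q$ has no $x^3$ term — and eliminating $b_1,c_1$, the quantity $w=a^2$ must satisfy
\[
w^3+(4t-1)w^2-8tw-16t^2=(w+4t)(w^2-w-4t)=0.
\]
The roots $w=\tfrac12\bigl(1\pm\sqrt{1+16t}\bigr)$ are rational only when $1+16t$ is a square, which is excluded since it would make all of $E[2]$ rational; so the only admissible root is $w=-4t$, and $a\in\Q$ exactly when $-4t$ is a square, i.e. $t=-s^2$. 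For such $t$ one computes explicitly
\[
q(x)=\bigl(x^2+2sx+s^2+s-\tfrac14\bigr)\bigl(x^2-2sx+s^2-s-\tfrac14\bigr),
\]
whose factors have discriminants $1-4s$ and $1+4s$; hence the preimages $Q$ of $P$ are defined over $K_\pm=\Q(\sqrt{1\pm 4s})$. This simultaneously proves the equivalence and identifies the fields.

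Finally I would pin down the torsion and separate the two fields. By Theorem \ref{teo1} the only group in $\Phi_\Q(2,\cC_4)$ strictly containing $\cC_8$ is $\cC_2\times\cC_8$, which would force $E[2]\subseteq E(K_\pm)$, i.e. $\Q(\sqrt{1+16t})=\Q\bigl(\sqrt{(1-4s)(1+4s)}\bigr)\subseteq K_\pm$; since neither $1-4s$ nor $1+4s$ is a square (otherwise $P$ would already be halved over $\Q$) this inclusion fails, so $E(K_\pm)_{\tors}=\cC_8$. The same non--square observation gives $K_+\neq K_-$, as equality would force $(1+4s)(1-4s)=1+16t$ to be a square, contradicting $E(\Q)_{\tors}=\cC_4$. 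I expect the main obstacle to be the bookkeeping that isolates $w=-4t$ as the unique usable root of the cubic and, relatedly, the careful exploitation of the $\cC_4$ hypothesis both to discard the spurious factorization coming from $w^2-w-4t$ and to upgrade $\cC_8\le E(K)_{\tors}$ to an equality.
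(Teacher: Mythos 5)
Your proof is correct, and its skeleton matches the paper's: both arguments hinge on Proposition \ref{newprop} applied to the rational point $P$ of order $4$ in the Tate model $\mathcal{T}_{t,0}$, both use the group structure of $\cC_8$ (a unique subgroup of order $4$, which must be $E(\Q)_{\tors}$) to rule out halving a non-rational point of order $4$, and both prove $K_+\neq K_-$ by noting that equality would make $(1+4s)(1-4s)=1+16t$ a rational square and hence force full rational $2$-torsion, contradicting $E(\Q)_{\tors}=\cC_4$. Where you genuinely diverge is the middle step. The paper writes down the four roots of $q(x)$ in closed form as nested radicals $\pm\sqrt{-t}\pm\tfrac12\sqrt{1\pm4\sqrt{-t}}$ and reads off the condition $t=-s^2$ from that expression; you never solve the quartic, but instead ask when $q(x)$ splits into two rational monic quadratics, which via the resolvent cubic $(w+4t)(w^2-w-4t)$ isolates $w=a^2=-4t$ as the only usable root, the other two being irrational precisely because $1+16t$ is a non-square (your computations here -- the Weierstrass data of $\mathcal{T}_{t,0}$, the resolvent cubic, and the explicit factorization with discriminants $1\mp 4s$ -- all check out against \eqref{qq}). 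This buys two things. First, the necessity direction becomes airtight: from the nested-radical form one still has to argue that no root can lie in a quadratic field when $-t$ is not a square, a point the paper passes over quickly, whereas your factorization argument settles it directly. Second, you explicitly upgrade $\cC_8\le E(K_\pm)_{\tors}$ to the equality $E(K_\pm)_{\tors}=\cC_8$ by invoking Theorem \ref{teo1} to exclude $\cC_2\times\cC_8$ (full $2$-torsion would force $\Q\bigl(\sqrt{(1-4s)(1+4s)}\bigr)\subseteq K_\pm$, impossible since neither $1\pm 4s$ is a rational square); the paper's proof of the ``if'' direction stops at producing a point of order $8$ over $K_\pm$, so on this point your version is the more complete of the two.
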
 

\begin{proof}
In this case, the roots of the quartic polynomial given at (\ref{qq}) are
$$
\sqrt{-t}\pm\frac{1}{2}\sqrt{1+4\sqrt{-t}} \quad,\quad -\sqrt{-t}\pm\frac{1}{2}\sqrt{1-4\sqrt{-t}}
$$

A necessary and sufficient condition then for a point $Q$ to exist over a quadratic field, with $2Q=(0,0)$, is $t=-s^2$ for a certain $s\in\Q$. Should this be the case, $Q \in E(K_\pm)[8]$, with $K_\pm = \Q(\sqrt{1\pm 4 s})$.

As above, it must be $(0,0)$ the point in $E[4]$ who gets divided by $2$. If there were a non--rational point $P \in E(K')$ of order $4$ over some quadratic field $K'$ such that there exists $Q \in E(K')$ with $2Q=P$, then $E(K')_{\tors}$ must be a group with an element $Q$ of order $8$ which does not generate the whole group (it does not generate $(0,0)$ in particular), which contradicts our assumption $E(K')_{\tors}= \cC_8$.

If $K_+ = K_-$, then $(1+4s)(1-4s)$ is a rational square. Therefore, $\Delta_E$ is a rational square. That is, $C_2\times C_2 \leq E(\Q)$, which is a contradiction. 
\end{proof}

\noindent {\bf Remark.--}  Note that the assumption $E(K)_{\tors}= \cC_8$ is indeed necessary. Since if we relax  this hypothesis to $E(K)_{\tors}\le \cC_8$, Lemma \ref{2dm4} is false:  the elliptic curve \texttt{240d6} has torsion subgroup $\cC_4$ (resp. $\cC_2\times\cC_8$, $\cC_8$, $\cC_8$) over $\Q$ (resp. $\Q(\sqrt{-1})$, $\Q(\sqrt{6})$, $\Q(\sqrt{-6})$) (see Table \ref{tablagrande}).

\begin{lemma}\label{2dm6}
Let $E$ be an elliptic curve defined over $\Q$ with $E(\Q)_{\tors} = \cC_6$. Let $t\in\Q$ such that $E$ is $\Q$--isomorphic to $\mathcal T_{t^2+t,t}$. Then, there exists a quadratic field $K$ with $ \cC_{12} \leq E(K)_{\tors}$ if and only if $t=-s^2$ for some $s\in\Q$. 

Moreover, $K=K_{\pm} := \Q(\sqrt{(1\pm s)(1\mp 3s)})$ in this situation and $K_+ \neq K_-$.
\end{lemma}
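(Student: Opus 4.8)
The plan is to follow the template of Lemma \ref{2dm4}, applying Proposition \ref{newprop} to the order-$6$ point $P=(0,-b/2)$ on the Weierstrass model of $\mathcal T_{t^2+t,t}$. Substituting $b=t^2+t$ and $c=t$ into (\ref{qq}) gives
$$
q(x)=x^4+\tfrac12(3t^2+6t-1)x^2+4t(t+1)x+\tfrac1{16}\bigl(9t^4+4t^3+30t^2+20t+1\bigr).
$$
The decisive computation is to factor $q$. Writing $q=(x^2+px+\cdot)(x^2-px+\cdot)$, the resolvent cubic in $z=p^2$ has $z=-4t$ as a root, so $q$ splits over $\Q(\sqrt{-t})$; factoring the two discriminants that appear (each of the shape $\tfrac14(\sqrt{-t}\mp1)^2\cdot(\,\cdots\,)$), I would find that the four roots of $q$ are
$$
-\sqrt{-t}\pm\frac{\sqrt{-t}-1}{2}\sqrt{(1+\sqrt{-t})(1-3\sqrt{-t})},\qquad \sqrt{-t}\pm\frac{\sqrt{-t}+1}{2}\sqrt{(1-\sqrt{-t})(1+3\sqrt{-t})}.
$$
The implication ``$\Leftarrow$'' is then immediate: if $t=-s^2$ with $s\in\Q$ these roots lie in $K_\pm=\Q(\sqrt{(1\pm s)(1\mp 3s)})$, and the associated $Q$ from Proposition \ref{newprop} satisfies $2Q=P$, hence has order $12$, so $\cC_{12}\le E(K_\pm)_{\tors}$.

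Before the converse I would record the bookkeeping that guarantees it is $P$ that gets halved. By Theorem \ref{teo1}, $\Phi_\Q(2,\cC_6)=\{\cC_6,\cC_{12},\cC_2\times\cC_6,\cC_3\times\cC_6\}$, and among these only $\cC_{12}$ contains an element of order $12$. Hence $\cC_{12}\le E(K)_{\tors}$ forces $E(K)_{\tors}=\cC_{12}$, whose two elements of order $6$ are exactly $\pm P$; so any order-$12$ point $Q\in E(K)$ has $2Q=\pm P$, and (replacing $Q$ by $-Q$ if needed) its $x$-coordinate produces a root $\alpha$ of $q$ with $\Q(\alpha)=\Q(Q)=K$.

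The heart of the argument, and the main obstacle, is the converse when $-t$ is not a rational square: one must rule out that the nested radical collapses, i.e. that $(1+\sqrt{-t})(1-3\sqrt{-t})$ is a square in $\Q(\sqrt{-t})$, which would put a root into the quadratic field $\Q(\sqrt{-t})$ although $\sqrt{-t}\notin\Q$. Since $\alpha$ then has degree exactly $2$ over $\Q$, its minimal polynomial is a rational quadratic factor of $q$; such a factor exists precisely when some root of the resolvent cubic is the square of a rational. Apart from $z=-4t$ (a square iff $t=-s^2$), the remaining two resolvent roots satisfy $z^2+(3t^2+2t-1)z-4t(t+1)^2=0$, whose discriminant factors as $D_2=(9t+1)(t+1)^3$; thus these roots are even rational only if $(9t+1)(t+1)$ is a square.

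I would close the gap with the single discriminant computation $\Delta_E=t^6(t+1)^3(9t+1)$. Since $E(\Q)_{\tors}=\cC_6$ has a unique point of order $2$, the $2$-torsion cubic does not split completely over $\Q$, so $\Delta_E$ is not a square; equivalently $(9t+1)(t+1)$ is not a square (otherwise $\cC_2\times\cC_2\le E(\Q)_{\tors}$, a contradiction). This eliminates the two extra resolvent roots, leaving $z=-4t$ as the only admissible one and forcing $t=-s^2$, which proves ``$\Rightarrow$''. The very same fact yields $K_+\neq K_-$: an equality would make $(1+s)(1-3s)(1-s)(1+3s)=(1+t)(1+9t)$ a square, again contradicting that $(9t+1)(t+1)$ is a non-square.
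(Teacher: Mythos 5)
Your argument is correct, and its skeleton is the paper's: apply Proposition \ref{newprop} to the rational point of order $6$ on the Weierstrass model of $\mathcal T_{t^2+t,t}$, compute the quartic (\ref{qq}), and use the fact that $\cC_{12}$ is the only group in $\Phi_\Q(2,\cC_6)$ with an element of order $12$ to see that the point being halved must be $\pm(0,-b/2)$. Your explicit roots are correct (they agree with the paper's displayed ones up to an overall sign of the $\sqrt{-t}$ term -- yours, in fact, are the accurate versions -- which is immaterial for the fields generated). Where you genuinely diverge is in the two delicate steps. For the necessity of $t=-s^2$, the paper reads the condition off the nested-radical shape of the roots without addressing the possibility that $(1\pm\sqrt{-t})(1\mp 3\sqrt{-t})$ collapses to a square in $\Q(\sqrt{-t})$, putting a root in a quadratic field even when $\sqrt{-t}\notin\Q$; you close this gap via the resolvent cubic: a degree-$2$ root forces a rational quadratic factorization of $q$, hence a resolvent root that is a rational square, and the two roots besides $z=-4t$ are not even rational because their discriminant $(9t+1)(t+1)^3$ is a non-square -- which you deduce from $\Delta_E=t^6(t+1)^3(9t+1)$ together with the uniqueness of the rational $2$-torsion point (both formulas check out). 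For $K_+\neq K_-$, the paper passes to the curve $z^2=(1-s^2)(1-9s^2)$, identifies it as the rank-zero curve \texttt{24a1} with Mordell--Weil group $\cC_2\times\cC_4$, and rules out nontrivial points; you instead observe that $K_+=K_-$ would make $(1-s^2)(1-9s^2)=(t+1)(9t+1)$ a rational square, contradicting the same non-square fact. The payoff of your route is economy and rigor: the single observation that $\Delta_E$ is not a rational square (because $E(\Q)_{\tors}=\cC_6$ has only one point of order $2$) powers both the converse and the distinctness of $K_\pm$, with no auxiliary Mordell--Weil computation; the paper's route, in exchange, proves the non-collapse statement for all parameter values at once, independently of any torsion hypothesis on the curve.
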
 

\begin{proof}
In this case, the roots of the polynomial given at (\ref{qq}) are
$$
\sqrt{-t}\pm\frac{1}{2}\sqrt{(1+t)(1-4\sqrt{-t}-3t)} \quad,\quad-\sqrt{-t}\pm\frac{1}{2}\sqrt{(1+t)(1+4\sqrt{-t}-3t)} 
$$

A necessary and sufficient condition then for a point $Q$ to exist over a quadratic field, with $2Q=P$, is $t=-s^2$ for a certain $s\in\Q$. Should this be the case: $Q \in E(K_\pm)[12]$, with $K_{\pm} = \Q(\sqrt{(1\pm s)(1\mp 3s)})$. 

Again, the point in $E[6]$ who gets divided by $2$ must be rational. This time it is easier, as the only group in $\Phi_{\Q}(2,\cC_6)$ with elements of order $12$ is precisely $\cC_{12}$, so the only two available points are $(0,0)$ and its inverse, which yield the same situation.

If $K_+ = K_-$ for some $s$, there exists $r\in\Q$ with 
$$
(1+s)(1-3s)=r^2(1-s)(1+3s).
$$ 
That is to say, the equation
$$
C:\ z^2=(1-s^2)(1-9s^2)
$$
has a non--trivial rational solution, $s\ne 0,\pm 1,\pm 1/3$ (these solutions correspond to Tate models which do not yield elliptic curves). $C$ defines then an elliptic curve with at least $8$ rational points: $6$ trivial ones, and $2$ more at infinity. But $C$ is $\Q$--isomorphic to \texttt{24a1}, whose Mordell group is $\cC_2\times \cC_4$. Therefore, the affine points in $C(\Q)$ correspond to the trivial points.
\end{proof}

\begin{lemma}\label{2dm8}
Let $E$ be an elliptic curve defined over $\Q$ with $E(\Q)_{\tors} = \cC_8$. Let $t\in\Q$ such that $E$ is $\Q$--isomorphic to $\mathcal T_{(2t-1)(t-1),(2t-1)(t-1)/t}$. Then, there exists a quadratic field $K$ with $ \cC_{16} \leq E(K)_{\tors}$ if and only if $t=s^2/(s^2+1)$ for some $s\in\Q$. 

Moreover, $K=K_{\pm} := \Q(\sqrt{(s^4-1)(-1\pm 2s+s^2)})$ in this situation and $K_+ \neq K_-$.
\end{lemma}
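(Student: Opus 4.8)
The plan is to mimic the proofs of Lemmas \ref{2dm4} and \ref{2dm6}. First I would substitute $b=(2t-1)(t-1)$ and $c=(2t-1)(t-1)/t$ into the quartic $q(x)$ of (\ref{qq}) attached to the order--$8$ point $P=(0,-b/2)$, and compute its four roots. By analogy with the cases $N=4,6$, I expect these roots to have the shape $\pm\theta\pm\tfrac12\sqrt{(\cdots)}$, where the ``outer'' radical $\theta$ satisfies $\theta^2=t/(1-t)$ up to a rational square and the quantity under the inner radical is essentially linear in $\theta$. A root therefore generates a quadratic field over $\Q$ precisely when $\theta$ is rational, i.e. when $t/(1-t)$ is a rational square. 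Writing $t/(1-t)=s^2$ and solving gives exactly $t=s^2/(s^2+1)$, which is the asserted necessary and sufficient condition.

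Once the condition holds I would substitute $t=s^2/(s^2+1)$ and simplify; using $1-t=1/(s^2+1)$, $2t-1=(s^2-1)/(s^2+1)$ and $t-1=-1/(s^2+1)$, the inner radicals collapse and the two quadratic fields produced by the roots are $K_\pm=\Q(\sqrt{(s^4-1)(-1\pm2s+s^2)})$. Proposition \ref{newprop} then furnishes an explicit point $Q\in E(K_\pm)$ with $2Q=P$; since $P$ has order $8$, the point $Q$ has order $16$ and $\cC_{16}\leq E(K_\pm)_{\tors}$.

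It remains to justify, as in the previous lemmas, that the point of $E[8]$ being halved may be taken to be the rational generator $P=(0,-b/2)$. If $\cC_{16}\leq E(K)_{\tors}$ for a quadratic field $K$, then since no group in $\Phi(2)$ contains an element of order $16$ other than $\cC_{16}$ itself, we have $E(K)_{\tors}=\cC_{16}$; its unique subgroup of order $8$ is forced to be the rational torsion $E(\Q)_{\tors}=\cC_8$, so the halved point is rational. By the remark following Proposition \ref{newprop}, $2$--divisibility over $K$ holds for all generators of $\cC_8$ simultaneously, so nothing is lost by halving $(0,-b/2)$.

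Finally, to see $K_+\neq K_-$, note that $K_+=K_-$ would force the product of the two radicands, namely $(s^4-1)^2(-1+2s+s^2)(-1-2s+s^2)=(s^4-1)^2(s^4-6s^2+1)$, to be a rational square, i.e. $s^4-6s^2+1$ to be a square. This defines the genus--$1$ curve $C:\,z^2=s^4-6s^2+1$, whose Jacobian computes to $Y^2=X^3-1296X$, a (non-minimal) model of the congruent number curve $y^2=x^3-x$, Cremona \texttt{32a}, which has rank $0$ and torsion $\cC_2\times\cC_2$. Hence $C(\Q)$ consists only of the two points at infinity together with $(s,z)=(0,\pm1)$; but $s=0$ yields $t=0$, which is not an admissible Tate parameter (it makes $c$ undefined). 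Thus $K_+=K_-$ cannot occur for a genuine curve, and $K_+\neq K_-$. I expect this last step --- locating all rational points on $C$ by identifying it with a rank--$0$ curve --- to be the main obstacle, exactly as the appeal to \texttt{24a1} was in Lemma \ref{2dm6}, while the root computation of the first paragraph is routine though heavier than in the cases $N=4,6$ because of the denominator $t$ appearing in $c$.
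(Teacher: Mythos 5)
Your proposal is correct and follows essentially the same route as the paper: the same quartic-root computation (the paper's condition $t(1-t)=\square$ is equivalent to your $t/(1-t)=\square$), the same identification of the fields $K_\pm$, the same appeal to the uniqueness of $\cC_{16}$ among admissible quadratic torsion groups to force the halved point to be rational, and the same final step reducing $K_+=K_-$ to rational points on the rank-zero curve \texttt{32a2} (which the paper quotes directly as $z^2=(-1+2s+s^2)(-1-2s+s^2)$, while you reach it via the Jacobian $Y^2=X^3-1296X$). All of your computational claims check out, so no gap remains.
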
 

\begin{proof}
In this case, the roots of the polynomial given at (\ref{qq}) are
$$
\begin{array}{r}
\sqrt{t(1-t)}\pm\frac{1}{2t}\sqrt{(1-2t)(1-6t+4t^2-4t\sqrt{t(1-t)})},\\[1mm]
 -\sqrt{t(1-t)}\pm\frac{1}{2t}\sqrt{(1-2t)(1-6t+4t^2-4t\sqrt{t(1-t)})}.
 \end{array}
$$
A necessary and sufficient condition then for a point $Q$ to exist over a quadratic field, with $2Q=P$, is $t(1-t)=s^2$ for a certain $s\in\Q$. This equation is a genus zero curve again, parametrized by:
$$
t=\frac{r^2}{r^2+1}\qquad,\qquad s=\frac{r}{r^2+1},
$$
for some $r\in\Q$. Should this be the case, $Q \in E(K_\pm)[12]$, with 
$$
K_{\pm} = \Q(\sqrt{(r^4-1)(-1\pm 2r+r^2)}).
$$

Once more, the point in $E[8]$ who gets divided by $2$ must be rational, as the only group in $\Phi_{\Q}(2,\cC_8)$ with elements of order $16$ is $\cC_{16}$.

Finally, let us check $K_+\ne K_-$ for all $s$. If not, there is some $r\in\Q$ with 
$$
(s^4-1)(-1+ 2s+s^2)=r^2(s^4-1)(-1- 2s+s^2)
$$
for a certain $s$. That implies the equation 
$$
C:\ z^2=(-1+ 2s+s^2)(-1- 2s+s^2)
$$ 
has a non--trivial rational solution (non--trivial meaning $s\ne 0$), as the trivial solutions match the Tate models which do not yield elliptic curves. $C$ defines an elliptic curve with at least $4$ rational points ($2$ trivial, $2$ at infinity), but in fact it is isomorphic to the curve \texttt{32a2} whose Mordell group is $\cC_2\times \cC_2$. Hence the affine points in $C(\Q)$ are just the trivial points and we are done.
\end{proof}

\section{Proof of theorem \ref{teo2}}

For a given $G \in \Phi(1)$ and $H\in \Phi_\Q \left(2,G \right)$, we calculate the number  $h$ of possible quadratic fields $K$ such that, for a given rational elliptic curve $E$ with $E(\Q)_{\tors}= G$, we have $E(K)_{\tors}= H$.

$\;$\\

\subsection{The cyclic case}

$\;$\\

$\bullet$ Clearly, if $H=\cC_{2}\times\cC_{2m}$ for some integer $m$, this can only happen over the quadratic field $K=\Q(\sqrt{\Delta_E})$. Note that $K$ is actually always a quadratic extension, as $\Q(E[2]) \neq \Q$. This rules out the cases:

\begin{itemize}
\item[$\circ$]  $G= \cC_2$, $H=\cC_2 \times \cC_{2m}$, with $m=1,3,5$; 
\item[$\circ$]  $G= \cC_4$, $H=\cC_2\times \cC_{4m}$, with $m=1,2,3$;
\item[$\circ$] $G= \cC_r$, $H=\cC_2 \times \cC_r$, with $r=6,8,10,12$.
\end{itemize}

$\bullet$ Assume $G=\cC_{2}$ and $H \leq \cC_{4n}$. Lemma \ref{2dm2} shows that there can be $1$ or $2$ quadratic fields in which this situation holds. When $H=\cC_4,\cC_8$ in fact both things can happen (see examples in Table \ref{tablagrande} at the appendix). 

However, for the remaining cases, the situation can only hold in one quadratic field.  Let us do with a little detail the case $H=\cC_{12}$, as the case $H=\cC_{16}$ is analogous. So we are assuming $G=\cC_2$ and $H=\cC_{12}$ for two different quadratic fields. Then, as we also have a quadratic field where the full $2$--torsion appears, $\cC_6 \times \cC_{12}$ should be a subgroup of one of the groups in $\Phi_\Q(2^\infty)$, and that is not possible from Theorem \ref{FJNT1}.

$\bullet$ If $G = \cC_{2n}$ and $H=\cC_{4n}$ for $n=2,3,4$, Lemmas \ref{2dm4},\ref{2dm6},\ref{2dm8} (respectively) show that there are exactly two quadratic fields where the appropriate torsion extension occurs.

$\bullet$ If $H=\cC_{4}\times\cC_{4}$ (resp. $H=\cC_{3}\times\cC_{3n}$, $n=1,2$) the quadratic field must be $K=\Q(\sqrt{-1})$ (resp. $K=\Q(\sqrt{-3})$) by 
\ref{nthroot}. This proves the cases 
\begin{enumerate}
\item[$\circ$] $G= \cC_4$, $H=\cC_4 \times \cC_4$; 
\item[$\circ$] $G= \cC_3$, $H=\cC_3 \times \cC_3$;
\item[$\circ$] $G= \cC_6$, $H=\cC_3 \times \cC_6$.
\end{enumerate}

$\bullet$ For any given $G = \cC_n$, $H=G \times \cC_{m}$ with $\gcd(n,m)=1$ can appear at most twice, since $E[m]=  \cC_{m}\times  \cC_{m}$. More precisely, if $m=5,7,9$ then only one quadratic field may extend the torsion in this way since, if there were two such quadratic fields, the cyclotomic field generated by the $m$--th roots of unity, $\Q(\zeta_m)$, should be a subfield of the corresponding biquadratic case from Proposition \ref{nthroot}, and that is not possible. This proves the cases:
\begin{enumerate}
\item[$\circ$] $G= \cC_1$, $H=\cC_m$, with $m=5,7,9$; 
\item[$\circ$] $G= \cC_2$, $H=\cC_{10}$.
\item[$\circ$] $G= \cC_3$, $H=\cC_{15}$.
\end{enumerate}
Now if $m=3$ then $H$ may appear once or twice. It actually happens twice in the following cases (see examples in Table \ref{tablagrande} at the appendix):
\begin{enumerate}
\item[$\circ$] $G= \cC_1$, $H=\cC_3$. 
\item[$\circ$] $G= \cC_2$, $H=\cC_6$.
\end{enumerate}

$\bullet$ There are only two cases remaining: $G = \cC_n$, $H = \cC_{3n}$ for $n=4,5$. Only one quadratic field is possible in these instances. If there were two quadratic fields where $H$ appears, then $\cC_n\times\cC_3\times\cC_3$ should be a subgroup of one of the groups  in $\Phi_\Q(2^\infty)$ for $n=4,5$; and that is impossible from Theorem \ref{FJNT1}.\\

\subsection{The non--cyclic case}

$\;$\\

Let $E$ be an elliptic curve defined over $\Q$ such that $E(\Q)_{\tors}=G$ where $G$ is the following:

$\bullet$ $G=\cC_{2}\times \cC_2$. If $H=\cC_{2}\times \cC_4$ there might be $1$, $2$ or $3$ quadratic extensions, following Proposition \ref{2dmnc} in the previous section. 

If $H=\cC_{2}\times \cC_{2n}$ with $n=3,6$ appears in two different quadratic extensions, then there are two independent points of order $3$ in $\Q(2^\infty)$. As a result, $\cC_6 \times \cC_6 \leq E(\Q(2^\infty))_{\tors}$, which contradicts Theorem \ref{FAA1}. 

If $H=\cC_{2}\times \cC_{8}$ for two different quadratic extensions, we must have two different points of order $8$. Let us call $L$ the composition field of these two quadratic extensions. There are two groups in $\Phi_\Q (2^\infty)$ with more than one element of order $8$: $\cC_{4}\times \cC_{8}$ and $\cC_{8}\times \cC_{8}$. But the first one is not our case: looking at the lattice of subgroups of $\cC_4 \times \cC_8$ one can realize that both $\cC_2 \times \cC_8$ have a common subgroup $\cC_2 \times \cC_4$, while the intersection (in our case) should only be $G = \cC_2 \times \cC_2$. This implies $E(L)_{\tors}$ had to be $\cC_{8}\times \cC_{8}$ and Proposition \ref{ClEx} tells us that under these circumstances $[L:\Q] \geq 16$. Hence only one quadratic extension with $H=\cC_{2}\times \cC_{8}$ can occur.

$\bullet$ $G=\cC_{2}\times \cC_4$. As we mentioned above, if $H=\cC_{4}\times \cC_4$ the only possible extension is $\Q(\sqrt{-1})/\Q$. 

When $H=\cC_{2}\times \cC_{8}$ the first part of Lemma \ref{2dm4} can be applied verbatim and it shows that $1$ or $2$ extensions can appear (both things occur). 

$\bullet$ $G=\cC_{2}\times \cC_6$. The only group extension, by Theorem \ref{teo1} is $H=\cC_{2}\times \cC_{12}$. Lemma \ref{2dm6} tells us (the first part) that either one or two relevant quadratic extensions may appear. 

Also, from Theorem \ref{FAA1} we know that $E(\Q(2^\infty))_{\tors} = \cC_4 \times \cC_{12}$, and by Proposition \ref{ClEx} that $E(L)_{\tors} = \cC_4 \times \cC_{12}$ implies $[L:\Q] \geq 8$. 

But, if there were two quadratic extensions, $K_1,K_2$ with $E(K_i)_{\tors} = \cC_2 \times \cC_{12}$, let us write $F$ the composite of $K_1$ and $K_2$ (in particular, $[F:\Q]=4$). Then clearly $E(F)_{\tors} = \cC_4 \times \cC_{12}$, because it must be contained in $E(\Q(2^\infty))_{\tors}$ and it should be strictly bigger than both $E(K_i)_{\tors}$. 

This is a contradiction and therefore, only one quadratic extension $K$ can appear with $E(K)_{\tors}= H =\cC_{2}\times \cC_{12}$.

\

\noindent {\bf Remark.--} These two last cases can also be found in \cite{Kwon1997}, but the proofs there are longer, as we can take advantage of the many results which have appeared concerning this matter since (specially those in \cite{Fujita2004, Fujita2005}).

\section{Proof of theorem \ref{teo3}}

Now we are going to prove Theorem \ref{teo3}. For this purpose, for a given $G\in\Phi(1)$ let us build a set $\mathcal S(G)$ consisting of the groups $H\in \Phi_\Q(2,G) \setminus \{G\}$, repeated as many times as the number of possible quadratic fields where $H$ appears in Theorem \ref{teo2}. Our task is checking, for any subset $S\in \mathcal S(G)$ if $S$ belongs to $\mathcal H_\Q(2,G)$ or not. 

\

\noindent {\bf Example.--} As 
$$
\Phi_\Q(2,\cC_1) = \left\{ \; \cC_1, \; \cC_3, \; \cC_5, \; \cC_7, \; \cC_9 \;  \right\}
$$ 
and Theorem \ref{teo2} tells us that two quadratic extensions can appear with torsion group $\cC_3$, we have
\begin{eqnarray*}
\mathcal{S} \left( \cC_1 \right) 
&=& \Big\{ \left[ \cC_3 \right]; \;  \left[ \cC_5 \right]; \;  \left[ \cC_7 \right]; \;  \left[ \cC_9 \right]; \; \left[ \cC_3, \cC_3 \right]; \; \left[ \cC_3, \cC_5 \right]; \; \left[ \cC_3, \cC_7 \right]; \; \left[ \cC_3, \cC_9 \right]; \\
&& \quad \left[ \cC_5, \cC_7 \right]; \; \left[ \cC_5, \cC_9 \right]; \; \left[ \cC_7, \cC_9 \right]; \; \left[ \cC_3, \cC_3, \cC_5 \right]; \; \left[ \cC_3, \cC_3, \cC_7 \right]; \; \left[ \cC_3, \cC_3, \cC_9 \right]; \\
&& \quad \left[ \cC_3, \cC_5, \cC_7 \right]; \; \left[ \cC_3, \cC_5, \cC_9 \right]; \; \left[ \cC_3, \cC_7, \cC_9 \right]; \;
\left[ \cC_5, \cC_7, \cC_9 \right]; \; \left[ \cC_3, \cC_3, \cC_5, \cC_7 \right]; \\ 
&& \quad \left[ \cC_3, \cC_3, \cC_5, \cC_9 \right]; \; \left[ \cC_3, \cC_3, \cC_7, \cC_9 \right]; \; \left[ \cC_3, \cC_5, \cC_7, \cC_9 \right]; \; \left[ \cC_3, \cC_3, \cC_5, \cC_7, \cC_9 \right] \; \Big\}.
\end{eqnarray*}

Mind that at Table \ref{tablagrande} we have (for all $G \in \Phi(1)$) examples of elliptic curves over $\Q$ satisfying the conditions in Theorem \ref{teo3}, for any $S\in\mathcal H_\Q(2,G)$. Therefore, now we have to prove that there does not exist any other possible $S\in\mathcal S(G)$. 

\vspace{3mm}

\noindent {\bf Remark.--} Let be $G\in\Phi(1)$ cyclic and of even order. Then, for any $S\in\mathcal H_\Q(2,G)$  there always exists a unique non-cyclic $H\in S$, the one corresponding to $\Q(E[2])$ (a quadratic extension in this case), where $E$ is the elliptic curve associated to $S$.

\subsection{The groups $\cC_7,\cC_9,\cC_2\times\cC_8$}

$\;$\\

These are the easiest cases, since by Theorem \ref{teo1} we have that these groups are stable under all quadratic extensions. Therefore, in these cases,
$$
\mathcal H_\Q(2,G)= \emptyset.
$$ 

\subsection{The groups $\cC_5, \cC_{10}, \cC_{12}, \cC_2\times\cC_{6}$}

$\;$\\

Using Theorem \ref{teo2}, these cases are almost as easy as the previous ones, since we have that $\mathcal S(G)$ has only one element and we have examples in Table \ref{tablagrande} for any of those cases, we obtain that 
$$
\mathcal H_\Q(2,G)=\mathcal S(G).
$$

\subsection{The group $\cC_1$}

$\;$\\

Consider the groups in $\Phi_\Q(2,\cC_1)$. Mind that the intersection of two groups must be trivial in this case, hence we must look for (two or more) elements in $\Phi_\Q (2,\cC_1)$, other than $\cC_1$, such that their product lies in $\Phi_{\mathbb Q}(2^{\infty})$. From that, we easily deduce that 
$$
\mathcal H_\Q(2,\cC_1) = \Big\{ \, [\cC_3]; \; [\cC_5]; \; [\cC_7]; \; [\cC_9]; \; [\cC_3, \cC_3]; \; [\cC_3, \cC_5] \Big\}.
$$

\subsection{The group $\cC_3$}

$\;$\\

From all cases in $\mathcal{S}(\cC_3)$, the only case to discard is $S=[ \cC_3 \times \cC_3, \; \cC_{15}]$. In that case, $\cC_3 \times \cC_{15}$ should be a subgroup of some group in $\Phi_{\mathbb Q}(2^{\infty})$. But this does not happen.
$$
\mathcal H_\Q(2,\cC_3)  = \Big\{ \, [\cC_3 \times \cC_3]; \; [\cC_{15}] \, \Big\}.
$$

\subsection{The group $\cC_8$}

$\;$\\

By the previous remark, Theorem \ref{teo2} and Lemma \ref{2dm8} we have that the only possible subsets in $\mathcal S(\cC_8)$ are $[\cC_2 \times \cC_8]$ and $[\cC_2 \times \cC_8,\cC_{16},\cC_{16}]$. Mind that $\cC_{16}$ appears twice or it does not appear at all, from Lemma \ref{2dm8}. Since we have examples in Table \ref{tablagrande} for those cases, we have proved:
$$
\mathcal H_\Q(2,\cC_8)  = \Big\{ \, [\cC_2 \times \cC_8]; \; [\cC_2 \times \cC_8,\cC_{16},\cC_{16}]\, \Big\}.
$$

\subsection{The group $\cC_2 \times \cC_4$}

$\;$\\

As previously, we have examples in Table \ref{tablagrande} for any subset in $\mathcal S(\cC_2\times \cC_4)$, which proves:
\begin{eqnarray*}
\mathcal H_\Q(2,\cC_2\times \cC_4) &=& \Big\{ \, [\cC_2 \times \cC_8]; \, [\cC_4 \times \cC_4]; \, [\cC_2 \times \cC_8, \cC_2 \times \cC_8]; \, [\cC_2 \times \cC_8, \cC_4 \times \cC_4]; \\
&& \quad \quad [\cC_2 \times \cC_8, \cC_2 \times \cC_8, \cC_4 \times \cC_4] \, \Big\}.
\end{eqnarray*}

\subsection{The group $\cC_6$}

$\;$\\

From the examples in Table \ref{tablagrande} the only case to discard is $S=[\cC_2\times\cC_6,\cC_3\times\cC_6,\cC_{12}, \cC_{12}]$ (as above, Lemma \ref{2dm6} implies that $\cC_{12}$ appears twice if it does). But if there exists an elliptic curve $E$ over $\Q$ such that over four quadratic fields has those torsion subgroups, then $\cC_3 \times \cC_{12}$ is a subgroup of $E(\Q(2^\infty))_{\tors}$. But no group of $\Phi_\Q(2^\infty)$ has such subgroups from Theorem \ref{FJNT1}. Therefore we have proved:
$$
\mathcal H_\Q(2,\cC_6)  = \Big\{ \, [\cC_2 \times \cC_6]; \; [\cC_2 \times \cC_6,\cC_{3}\times\cC_{6}]; \; [\cC_2 \times \cC_6,\cC_{12},\cC_{12}] \, \Big\}.
$$

\subsection{The group $\cC_4$}

$\;$\\

There must always be exactly one non--cyclic group, and Lemma \ref{2dm4} tells us that $\cC_8$, if it appears in a quadratic extension, then it appears in two quadratic extensions. So, a quick comparison between $\mathcal{S}(\cC_4)$ and $\mathcal{H}_\Q(2,\cC_4)$ in Theorem \ref{teo3} tells us that it suffices to prove two assertions. 

First, there does not exist $S\in \mathcal{H}_\Q(2,\cC_4)$ such that one of the following facts happens:
\begin{itemize}
\item $H_1,H_2\in S$ such that $\cC_8\le H_1$ and $\cC_{12}\le H_2$;
\item $H_1,H_2\in S$ such that $H_1=H_2=\cC_{12}$;
\end{itemize}

Note that there does not exist $H\in \Phi_\Q(2^\infty)$ with elements of order $8$ and $12$. This proves the first point. On the other hand, $\cC_{12}$ cannot appear twice in an element in $S$, since that would imply there should exist $H\in \Phi_\Q(2^\infty)$ with $\cC_3\times\cC_{12} \leq H$. But that is impossible too from Theorem \ref{FJNT1}. 

Second and last, we need to prove that if $\cC_4 \times \cC_4\in S$, then $S=[\cC_4 \times \cC_4]$. That is, we have to discard the following elements in $\mathcal{S}(\cC_4)$: 
$$
\left[ \cC_4\times\cC_{4}, \; \cC_{12} \right], \quad \left[ \cC_4\times\cC_{4}, \; \cC_{8}, \; \cC_{8} \right]. 
$$

Let us prove first $\left[ \cC_4\times\cC_{4}, \; \cC_{12} \right] \notin \mathcal{S}(\cC_4)$. Suppose that there exists an elliptic curve $E$ over $\Q$ and a squarefree integer $D$ such that $E(\Q(\sqrt{D}))_{\tors} = \cC_{12}$ and $E(\Q(\sqrt{-1}))_{\tors} = \cC_4\times\cC_{4}$. Let us denote by $L = \Q (\sqrt{D},\sqrt{-1} )$. In our situation $\cC_6\leq E_D(\Q)_{\tors}$ from \cite[Cor. 4]{Gonzalez-Jimenez-Tornero2014} and $\cC_2\times\cC_{6}\leq E_D(\Q(\sqrt{-1}))_{\tors}$. Let $t\in \Q$ be the relevant parameter in the Tate model of $E_D$ (the one we recalled in subsection 3.2). That is, we can find a $\Q$-isomorphism such that a model for $E_D$ is:
$$
Y^2=(X - t) \left(X^2-\frac{1}{4}( 3 t^2 + 2 t-1) X-\frac{t}{4}(t^2+2t+1) \right).
$$
Now, since $\cC_2\times\cC_2<E_D(\Q(\sqrt{-1}))_{\tors}$, this means the discriminant of $E_D$ is a square in $\Q(\sqrt{-1})$ (and not in $\Q$), which implies $(1+t)(1+9t)=-r^2$ for some $r\in\Q$. Parametrizing this conic we obtain 
$$
t=-\frac{81m^2+1}{9(9m^2+1)}
$$
for some $m\in\Q$. Taking this back to the equation above we have the points of order $2$: $(A\pm B \sqrt{-1},0),(t,0)$ where
\begin{equation}\label{eq1}
A= -\frac{4 (1 + 36 m^2 + 243 m^4)}{27 (1 + 9 m^2)^3}  \quad\mbox{and}\quad B=-\frac{24 (m + 9 m^3)}{27 (1 + 9 m^2)^3}.
\end{equation}
Using 
$$
E (\Q (\sqrt{D} ))_{\tors} = \cC_{12}, \quad E(\Q(\sqrt{-1}))_{\tors} = \cC_4\times\cC_{4},
$$ 
we have $E(L)_{\tors} = \cC_4\times \cC_{12}$ from Theorem \ref{FJNT1}. Therefore 
$$
E_D(L)_{\tors} = \cC_4\times \cC_{12},
$$
since $E$ and $E_D$ are isomorphic over $\Q (\sqrt{D})$. Let us prove that this is impossible. Assume that all the points of order $2$ can be divided by two in $L$. In particular, there should exist $\gamma\in L$ such that $A\pm B \sqrt{-1} =\gamma^2$. If 
$$
\gamma=a_0+a_1 \sqrt{-1}+a_2\sqrt{D}+a_3\sqrt{-D},
$$
then it is a straightforward computation to check that a necessary condition is that $\gamma=a+b\sqrt{-1}$ or $\gamma=a\sqrt{D}+b\sqrt{-D}$ for some $a,b\in\Q$. Assuming that $\gamma$ is of one of the forms above, the equality $A\pm B \sqrt{-1}=\gamma^2$ holds if and only if $A=(a^2-b^2)r$ and $B=2abr$, where $r=1$ or $r=D$. Solving this equations on the variables $a$ and $b$ and using the definition of $A$ and $B$ from (\ref{eq1}) we obtain
$$
a=\pm \frac{2m}{1+9m^2}\sqrt{\frac{2}{3r}}\left(1+27m^2\pm \sqrt{(1+9m^2)(1+81m^2)}\right)^{-\frac{1}{2}}.
$$
Then a necessary condition for $a \in \Q$ is that $(1+9m^2)(1+81m^2)=s^2$ for some $s\in\Q$. This equation defines an elliptic curve (\texttt{48a1}) over $\Q$, whose Mordell group is $\cC_2\times\cC_2$. But apart form the points at infinity, these points correspond to $m=0$, and this value gives us a Tate model which does not yield an elliptic curve (it corresponds to $t=-1/9$). This proves $\left[ \cC_4\times\cC_{4}, \; \cC_{12} \right] \notin \mathcal{S}(\cC_4)$.

Finally then, let us prove $\left[ \cC_4\times\cC_{4}, \; \cC_8, \; \cC_8 \right] \notin \mathcal{S}(\cC_4)$. That is, we have to prove that, if an elliptic curve $E$ over $\Q$ has $E(\Q)_{\tors}= \cC_4$ then there does not exist a squarefree integer $D$ such that $E (\Q(\sqrt{D} ))_{\tors} = \cC_{8}$ and $E(\Q(\sqrt{-1}))_{\tors} = \cC_4\times\cC_{4}$. 

If $\cC_8 = E(K)_{\tors}$ for some quadratic field $K$ then $t=-s^2$ for some $s\in\Q$ from Lemma \ref{2dm4}; where $t$ is the relevant parameter in the Tate model of $E$. That is:
$$
E:\, Y^2=X^3+\frac{1}{4}\left( 1+4s^2 \right) X^2+\frac{s^2}{2}X+\frac{s^4}{4}.
$$
As $E(\Q(\sqrt{-1}))_{\tors} = \cC_4\times\cC_{4}$ it must have full $2$--torsion over $\Q(\sqrt{-1})$ and that means $\Delta_E$ is a square in $\Q(\sqrt{-1})$. This implies $1-16s^2$ is a square in $\Q(\sqrt{-1})$ (and not in $\Q$), and hence we can write  
$$
1-16s^2=-r^2, 
$$ 
for some $r \in \Q$. Parametrizing this conic we obtain
$$
s=\frac{m^2+4m+5}{4(m+1)(m+3)},\qquad r=\frac{2(2+m)}{(m+1)(m+3)},
$$
for some $m\in\Q$. Taking this back to the equation of $E$ we find that the full $2$--torsion is given by points $(\alpha_i,0)$, $i=1,2,3$, where
$$
\alpha_1=-\frac{(m+2+\sqrt{-1})^2}{8(m+1)(m+3)},\quad 
\alpha_2=-\frac{(m+2-\sqrt{-1})^2}{8(m+1)(m+3)},\quad 
\alpha_3=-\frac{(5+4m+m^2)^2}{16(m+1)^2(m+3)^2}.
$$
As $E(\Q(\sqrt{-1}))_{\tors} = \cC_4\times\cC_4$, all these points can be halved in $\Q(\sqrt{-1})$, so, by Lemma \ref{2dm2}, $\alpha_i-\alpha_j$ must be a square in $\Q(\sqrt{-1})$ for all $i,j\in\{1,2,3\}$. In particular
$$
\alpha_1-\alpha_2=-\frac{(m+2)}{2(m+1)(m+3)}\sqrt{-1}.
$$
That is $\alpha_1-\alpha_2=r\sqrt{-1}$ where $r\in\Q$. So, if $\alpha_1-\alpha_2=\beta^2$ for some $\beta=a+b\sqrt{-1}\in\Q(\sqrt{-1})$, it must be $b=\pm a$, and $\beta=a\pm a \sqrt{-1}$. Then
$$
-\frac{(m+2)}{2(m+1)(m+3)}=\pm 2a^2,
$$
otherwise said,
$$
(m+1)(m+2)(m+3)=\pm z^2,
$$
for some $z\in\Q$. These two equations define elliptic curves over $\Q$ and in fact both are isomorphic to \texttt{32a2}, whose Mordell group is $\cC_2\times\cC_2$. So, the only available solutions are the trivial ones ($z=0$) given by $m=-1,-2,-3$. But $m=-1,-3$ are not available in the parametrization above (as they divide the numerator of $s$), while $m=-2$ gives us a Tate model which does not yield an elliptic curve (it corresponds to $t=-1/16$). 

Therefore we have proved:
\begin{eqnarray*}
\mathcal H_\Q(2,\cC_4)  &=& \Big\{ \, [\cC_2\times\cC_4]; \; [\cC_2\times\cC_8]; \;  [\cC_2\times\cC_{12}], \; [\cC_4\times\cC_4]; \\
&& \quad [\cC_2\times\cC_4,\cC_{12}]; \; [\cC_2\times\cC_4, \cC_8, \cC_8]; \; [\cC_2\times\cC_8, \cC_8, \cC_8] \Big\}.
\end{eqnarray*}

\subsection{The group $\cC_2 \times \cC_2$}

\

\vspace{1mm}
As before, a comparison between $\mathcal{S}(\cC_2 \times \cC_2)$ and $\mathcal{H}_\Q (2,\cC_2 \times \cC_2)$ (shown in Table \ref{tablagrande} at the appendix) tells us that the proof for this case amounts to proving that, for any $S\in \mathcal{S}(\cC_2 \times \cC_2)$: 
\vspace{1.8mm}

\noindent(1) If $\cC_2\times\cC_{12}\in S$, then $S=[\cC_2\times\cC_{12}]$: Suppose that there exists another $H\in \Phi_\Q(2,\cC_2\times\cC_2)$ such that $H\in S$. Then there exists an elliptic curve defined over $\Q$ and two squarefree integers $D,D'$ such that $E(\Q(\sqrt{D}))_{\tors}=\cC_2\times\cC_{12}$ and $E(\Q(\sqrt{D'}))_{\tors}=H$.
\begin{itemize}
\item Suppose that $H=\cC_2\times\cC_4$. Then there is a point of order $12$ and a point of order $4$ in different fields, and therefore they generate different rational points of order $4$. That implies we may have $\cC_4\times\cC_{12}$ over the biquadratic field $\Q(\sqrt{D},\sqrt{D'})$, but Proposition \ref{ClEx} tells us that this group can only appear at degree $2^3$ or larger.
\item  Suppose that $H=\cC_2\times\cC_6$.  Then we would have $\cC_6\times\cC_6 \leq E(\Q(2^\infty))_{\tors}$. This contradicts Theorem \ref{FAA1}.
\item Finally, assume that $H=\cC_2\times\cC_8$. Then $\cC_8\times\cC_{12} \leq E(\Q(2^\infty))_{\tors}$. This again contradicts Theorem \ref{FAA1}.
\end{itemize}

\noindent (2) $\left[\cC_2\times\cC_{6}\,,\,\cC_2\times\cC_{8}\right]\not\subset S$. Were this the case we would have $\cC_6\times\cC_{8} \le E(\Q(2^\infty))_{\tors}$ which is not possible (Theorem \ref{FAA1}).\\

\noindent (3) $S\ne \left[ \cC_2\times\cC_{6}, \, \cC_2\times\cC_{4}, \, \cC_2\times\cC_{4} \right]$. We will not give full details here, as they are similar to those in the previous subsection.

Let $E$ be an elliptic curve defined over $\Q$ such that $E(\Q)_{\tors}=\cC_2\times \cC_2 $ and there exist three squarefree integers $D_1,D_2,D$ such that 
\begin{eqnarray*}
E (\Q ( \sqrt{D_i} ) )_{\tors} &=& \cC_2\times \cC_4 \mbox{ for $i=1,2$}, \\
E (\Q (\sqrt{D} ))_{\tors} &=& \cC_2\times \cC_6.
\end{eqnarray*}
We are going to prove that this is impossible. In other words, $ \cC_4\times \cC_{12} \leq E(L)_{\tors}$ is not possible for any triquadratic field $L$. This is equivalent to the same statement, but for the elliptic curve $E_D$, since $E$ and $E_D$ are isomorphic over $\Q(\sqrt{D})$. For this purpose, we are going to use the general curve with torsion $\cC_2 \times \cC_6$ by Kubert \cite{Kubert} in the form given by Elkies \cite{Elkies}:
$$
  E':  Y^2 = \left(X+t^2 \right)  \left(X+(t+1)^2 \right)  \left( X+(t^2+t)^2 \right)
$$
with $3$-torsion points at $X=0$.  Now mind that, if the curve  $Y^2 = X (X^2+aX+b)$ has a $4$-torsion point $T$ such that $2T = (0,0)$, then the  first coordinate of $T$ is a square root of $b$.  For $E'$, there are three choices of $b$, all equivalent. This is because, projectively, $E'$ can be written as
$$
  Y^2 = \left( X+(tu)^2 \right) \left(X+(tv)^2 \right) \left( X+(uv)^2 \right)
$$
with $t+u+v$ = 0. In our case the three possible $b$'s are:
$$
t^3 (2 + t) (1 + 2 t), \; -(-1 + t) (1 + t)^3 (1 + 2 t), \; (-1 + t) t^3 (1 + t)^3 (2 + t).
$$
Once $E_D$ has full $4$--torsion over some number field $L$ then $L$ must contain $\sqrt{-1}$ from Proposition \ref{nthroot}; so there are really only two other square roots that one needs to specify to determine the triquadratic field. If two of the $b$'s yield points defined over the same quadratic field then either one of these $b$'s is a square or two of them multiply to a square.  But this is already enough because each possibility yields an elliptic curve of rank zero (\texttt{24a1} and \texttt{48a1}) and the torsion points on both curves correspond to singular curves in the equation $E'$.

\noindent (4) If $\left[\cC_2\times\cC_{4},\cC_2\times\cC_{4},\cC_2\times\cC_{4}\right]\subset S$, then $S=\left[\cC_2\times\cC_{4},\cC_2\times\cC_{4},\cC_2\times\cC_{4}\right]$. A group $\cC_2\times\cC_{6}$ cannot appear in $S$ from the argument above. And $\cC_2\times\cC_{8}$ cannot appear either because there would be a point of order $8$ in a quadratic extension, coming from halving a point of order $4$, but we have already obtained all possible quadratic extension where the torsion grows ($3$, in fact, from Proposition \ref{2dmnc}). 

All the remaining cases do happen, as shown in Table \ref{tablagrande}. Therefore we have proved:
\begin{eqnarray*}
\mathcal H_\Q(2,\cC_2\times\cC_2)  &=& \Big\{ \left[ \cC_2 \times \cC_4 \right]; \; \left[ \cC_2 \times \cC_6 \right]; \; 
\left[ \cC_2 \times \cC_8 \right]; \; \left[ \cC_2 \times \cC_{12} \right]; \\
&& \quad \left[ \cC_2 \times \cC_4, \cC_2 \times \cC_4 \right]; \; \left[ \cC_2 \times \cC_4, \cC_2 \times \cC_6 \right]; \; \left[ \cC_2 \times \cC_4, \cC_2 \times \cC_8 \right]; \\
&& \quad \left[ \cC_2 \times \cC_4, \cC_2 \times \cC_4,\cC_2 \times \cC_4 \right]; \; \left[ \cC_2 \times \cC_4, \cC_2 \times \cC_4,\cC_2 \times \cC_8 \right] \Big\}.
\end{eqnarray*}

\subsection{The group $\cC_2$}

\

Some quick remarks on $\mathcal H_\Q(2,\cC_2)$ beforehand:

First, no element of $\mathcal H_\Q(2,\cC_2)$ can contain both $\cC_{10}$ (or $\cC_2\times\cC_{10}$) and $\cC_m$ with some $m\ge 4$. The reason for this is that no element in $\Phi_\Q(2^\infty)$ has points of order $10$ and points of order $m$.
This, together with the remark at the beginning of the section, shows that:

\begin{itemize}
\item $\cC_2\times\cC_{10}$ can only appear in an element of $\mathcal H_\Q(2,\cC_2)$ as $\left[ \cC_2\times\cC_{10}\right]$.
\item $\cC_{10}$ can only appear as $\left[ \cC_{10}, \, \cC_2\times\cC_{2} \right]$.
\end{itemize}

Second, there are some pairs which cannot appear together in an element of $\mathcal H_\Q(2,\cC_2)$:
\begin{itemize}
\item $\cC_{6}$ (or $\cC_2\times\cC_{6}$) and $\cC_8$, as there is no $H\in \Phi_\Q(2^\infty)$ with points of order $6$ and points of order $8$.
\item $\cC_{8}$ and $\cC_{16}$. Assume $\cC_8 = \langle P \rangle$ and $\cC_{16} = \langle Q \rangle$ are the torsion subgroups in two different quadratic extensions. Consider the group homomorphism
\begin{eqnarray*}
\varphi: \cC_8 \times \cC_{16} & \longrightarrow & E(\Q(2^\infty)) \\
(nP,mQ) & \longmapsto & nP + mQ
\end{eqnarray*}
which verifies $\ker(\varphi) = \langle (4P,8Q) \rangle$, as the rational point of order $2$ is the only one who has its inverse in both quadratic extensions. 

So $E(\Q(2^\infty))_{\tors}$ contains a group of $64$ elements with (at least) an element of order $8$ and no elements of order $16$. From Theorem \ref{FJNT1} this would imply there exists an elliptic curve $E$ defined over $\Q$ such that $E(\Q)_{\tors}=\cC_2$ and $\cC_8\times\cC_8 \leq E(\Q(2^\infty))_{\tors}$ and this contradicts Proposition \ref{FJNT2}.
\end{itemize}

Another important remark here is the following: let $E$ be an elliptic curve defined over $\Q$ such that there is a quadratic extension $K/\Q$ with $\cC_n = E(K)_{\tors}$, and $4|n$, then there must be another quadratic extension $K'/\Q$ with $\cC_m = E(K')_{\tors}$ with $4|m$. Moreover, there are no more extensions where the torsion grows, apart from the splitting field of $X^3+AX+B$ which gives a non--cyclic torsion group. This can be deduced from Lemma \ref{2dm2} as there are either $2$ or no quadratic extension where one can get points of order $4$ and, therefore, groups $\cC_n$ and $\cC_m$ with $n,m \in 4\Z$. The following pairs may then appear:
$$
\{\cC_{4},\cC_{4}\},\; \{\cC_{4},\cC_{8}\},\; \{\cC_{4},\cC_{12}\},\; \{\cC_{4},\cC_{16}\},\;
\{\cC_{8},\cC_{8}\},\; \{\cC_{8},\cC_{12}\},\; \{\cC_{8},\cC_{16}\},\; \{\cC_{12},\cC_{16}\},
$$
although the last three ones can already be ruled out from the arguments above.

\vspace{1mm}
Let us then construct the elements $S \in \mathcal H_\Q(2,\cC_2)$ in ascending order of $\#S$:
\begin{itemize}
\item $\#S=1$: In this case $S\in\{[\cC_2\times\cC_{2}],\; [\cC_2\times\cC_{6}],\; [\cC_2\times\cC_{10}]\}$. All of these cases can occur (see examples in Table \ref{tablagrande}).
\item $\#S=2$: In Table \ref{tablagrande} we can find examples of: 
$$
[\cC_2\times\cC_{2}, \;\cC_{6}], \; [\cC_2\times\cC_{2},\;\cC_{10}], \; [\cC_2\times\cC_{6},\;\cC_{6}].
$$
These are all the possibilities, from Theorem \ref{teo1} and the previous remarks.
\item $\#S=3$ with $\cC_2\times\cC_{2}\in S$. We have example for all the possible cases (after taking into account the preliminary remarks), which are:
$$
[\cC_2\times\cC_{2},\;\cC_{4},\;\cC_{4}], \;\; [\cC_2\times\cC_{2},\;\cC_{4},\;\cC_{8}], \;\; [\cC_2\times\cC_{2},\;\cC_{4},\;\cC_{12}],
$$
$$
[\cC_2\times\cC_{2},\;\cC_{4},\;\cC_{16}], \;\; [\cC_2\times\cC_{2},\;\cC_{8},\;\cC_{8}], \;\;
[\cC_2\times\cC_{2},\;\cC_{6},\;\cC_{6}].
$$
\item $\#S=3$ with $\cC_2\times\cC_{6}\in S$. We have examples for $[\cC_2\times\cC_{6},\;\cC_{4},\;\cC_{4}]$ and the rest can be ruled out. Precisely:
$$
[\cC_2\times\cC_{6},\;\cC_{4},\;\cC_{8}], \;\; [\cC_2\times\cC_{6},\;\cC_{4},\;\cC_{16}], \;\; [\cC_2\times\cC_{6},\;\cC_{8},\;\cC_{8}]
$$
cannot appear because there is no $H\in \Phi_\Q(2^\infty)$ with points of order $6$ and points of order $8$. Also
$$
[\cC_2\times\cC_{6},\;\cC_{4},\;\cC_{12}]
$$
is not an option, as that would imply $\cC_3\times\cC_{12}$ is a subgrup of some $H\in\Phi_\Q(2^\infty)$. Finally,
$$
[\cC_2\times\cC_{6},\;\cC_{6},\;\cC_{6}]
$$
is not an option. Were this the case, we would have three $\cC_3$ subgroups (different pairwise, as they appear in different quadratic extensions) of some $H\in \Phi_\Q(2^\infty)$, which is not possible.
\item $\#S=4$ with $\cC_2\times\cC_{2}\in S$. We have examples (see Table \ref{tablagrande} as usual) for
$$
S=[\cC_2\times\cC_{2},\;\cC_{4},\;\cC_{4},\;\cC_{6}],
$$
and the remaining possibilities do not happen, in a similar way as the previous case. In fact,
$$
[\cC_2\times\cC_{2},\;\cC_{4},\;\cC_{8},\;\cC_{6}], \;\; [\cC_2\times\cC_{2},\;\cC_{4},\;\cC_{16},\;\cC_{6}], \;\; 
[\cC_2\times\cC_{2},\;\cC_{8},\;\cC_{8},\;\cC_{6}]
$$
all have points of order $6$ and points of order $8$, while 
$$
[\cC_2\times\cC_{2},\;\cC_{4},\;\cC_{12},\;\cC_{6}],
$$
would imply $\cC_3\times\cC_{12} \leq H$ for some group $H\in \Phi_\Q(2^\infty)$. 

\item $\#S=4$ with $\cC_2\times\cC_{6}\in S$. The only case would be $S=[\cC_2\times\cC_{6},\;\cC_{4},\;\cC_{4},\;\cC_{6}]$ and in fact it does not occur, as it would imply $\cC_3\times\cC_{12}$ is a subgroup for a certain $H\in\Phi_\Q(2^\infty)$.

\item $\#S=5$. The only possible case would be $S=[\cC_2\times\cC_{2},\;\cC_{4},\;\cC_{4},\;\cC_{6},\;\cC_{6}]$, which would imply, again,  $\cC_3\times\cC_{12} \leq H$, for some $H\in \Phi_\Q(2^\infty)$.
\end{itemize}
Therefore we have proved:

\begin{eqnarray*}
\mathcal H_\Q \left( 2,\cC_2\times\cC_2 \right) &=& \Big\{ \; \left[ \cC_2\times\cC_2 \right]; \, \left[ \cC_2\times\cC_6 \right]; \,
\left[ \cC_2\times\cC_{10} \right]; \, \left[ \cC_2\times\cC_2, \, \cC_6 \right]; \, \left[ \cC_2\times\cC_2 , \, \cC_{10} \right]; \\
&& \quad  \left[ \cC_2\times\cC_6, \, \cC_6 \right]; \; \left[ \cC_2\times\cC_2, \, \cC_4, \, \cC_4 \right];\, \left[ \cC_2\times\cC_2, \, \cC_6, \, \cC_6 \right];\\
&& \quad \left[ \cC_2\times\cC_2, \, \cC_8, \, \cC_8 \right]; \, \left[ \cC_2\times\cC_2, \, \cC_4, \, \cC_8 \right]; \, \left[ \cC_2\times\cC_2, \, \cC_4, \, \cC_{12} \right]; \\
&& \quad  \left[ \cC_2\times\cC_2, \, \cC_4, \, \cC_{16} \right]; \, \left[ \cC_2\times\cC_6, \, \cC_4, \, \cC_4 \right]; \, \left[ \cC_2\times\cC_2, \, \cC_4, \, \cC_4, \, \cC_6 \right] \Big\}.
\end{eqnarray*}

This finishes the proof of Theorem \ref{teo3}.

\

\section*{Appendix: Computations}

Let $G \in \Phi(1)$, $S= \left[ H_1,...,H_m \right]\in\mathcal{H}_{\Q}(2,G)$, $E$ an elliptic curve defined over $\Q$ such that $E({\mathbb Q})_{\tors} = G$ and let $D_1,\dots,D_m\in\mathbb Z$, squarefree, such that 
$$
E( \Q(\sqrt{D_i}))_{\tors} = H_i \mbox{ for } i=1,...,m.
$$
Let us write  
$$
F_S=\Q \left( \sqrt{D_1},\dots,\sqrt{D_m} \right).
$$
Table \ref{tablagrande} shows an example of every possible situation, where at
\begin{itemize}
\item the first column is $S$,
\item the second column is $S \in\mathcal{H}_{\Q}(2,G)$,
\item the third column is $\# S$,
\item the fourth column is $E(F_S)_{\tors}$,
\item the fifth column is the degree of $F_S$ over $\Q$,
\item the sixth column is the label of the elliptic curve $E$ with minimal conductor satisfying the conditions above, 
\item the seventh column displays the $D's$ corresponding to the respective $H's$ in $S$.
\end{itemize}

\noindent {\bf Remark.--} With the previous notation, we have computed for any curve in the Antwerp--Cremona tables \cite{cremonaweb}: $G$, $S$ and $E(F_S)_{\tors}$. 
Interestingly, for a given $S$, the group $E(F_S)_{\tors}$ seem to be fully determined, except for the cases
$$
G = \cC_2; \quad S = \left[ \cC_2 \times \cC_2, \; \cC_4, \; \cC_ 4 \right]; 
$$
$$
G = \cC_2 \times \cC_2; \quad S = \left[ \cC_2 \times \cC_4, \; \cC_2 \times \cC_ 4 \right]
$$
where two different $E(F_S)$ appear as we run through the entire set of curves in \cite{cremonaweb}. Given the amount of computations we have carried out, we think it is safe to conjecture that this is precisely the case

\

\noindent {\bf Remark.--} Comparing the results in Table \ref{tablagrande} with the set $\Phi_\Q \left( 2^\infty \right)$ we can conclude that the only groups in  $\Phi_\Q(2^\infty)$ which do {\em not} appear if we consider the groups $E( F_S )_{\tors}$ are:
$$
\cC_4\times\cC_{12}, \quad \cC_4\times\cC_{16}, \quad \cC_8\times\cC_{8}.
$$
These are, precisely, the groups discussed at Proposition \ref{ClEx}. Our computations suggest that this is in fact the case, but we have not proved this in detail.

\begin{table}[ht]
\caption{$h= \# S \mbox{ for } S \in \mathcal{H}_{\Q}(2,G)$, $d= [F_S:\Q]$}\label{tablagrande}
\begin{tabular}{|c|l|c||c|c||c|c|}
\hline
$G$ & $\mathcal{H}_{\Q}(2,G)$ & $h$& $E(F_S)_{\tors}$ & $d$ & label & $D's$\\
\hline\hline
\multirow{6}{*}{$\cC_1$} & $\cC_3$  & \multirow{4}{*}{$1$}& $\cC_3$ & \multirow{4}{*}{$2$} &\texttt{19a2}& $-3$\\
\cline{2-2}\cline{4-4}\cline{6-7}
& $\cC_5$    & &$\cC_5$ & &\texttt{75a2} & $5$ \\
\cline{2-2}\cline{4-4}\cline{6-7}
& $\cC_7$    & & $\cC_7$&& \texttt{208d1}& $-1$\\
\cline{2-2}\cline{4-4}\cline{6-7}
& $\cC_9$   & & $\cC_9$& & \texttt{54a2}& $-3$\\
\cline{2-3}\cline{4-7}
& $\cC_3,\cC_3$   & \multirow{2}{*}{$2$}& $\cC_3\times\cC_3$ &  \multirow{2}{*}{$4$} & \texttt{175b2} & $5,-15$\\
\cline{2-2}\cline{4-4}\cline{6-7}
& $\cC_3,\cC_5$   & & $\cC_{15}$ & & \texttt{50a4}& $-3,5$ \\
\hline
\hline
\multirow{14}{*}{$\cC_2$} & $\cC_2\times\cC_2$  & \multirow{3}{*}{$1$}&$\cC_2\times\cC_2$ &  \multirow{3}{*}{$2$} & \texttt{46a1} & $-23$\\
\cline{2-2}\cline{4-4}\cline{6-7}
&  $\cC_2\times\cC_6 $ & & $\cC_2\times\cC_6 $& & \texttt{36a3} & $-3$\\
\cline{2-2}\cline{4-4}\cline{6-7}
& $ \cC_2\times\cC_{10} $   & & $ \cC_2\times\cC_{10} $&& \texttt{450a3} & $-15$\\
\cline{2-3}\cline{4-5}\cline{6-7}
& $ \cC_2\times\cC_2, \cC_6 $   & \multirow{3}{*}{$2$}& $\cC_2\times\cC_6 $&  \multirow{10}{*}{$4$}& \texttt{14a3} & $-7,-3$\\
\cline{2-2}\cline{4-4}\cline{6-7}
& $\cC_2\times\cC_2 , \cC_{10}$   & &  $\cC_2\times\cC_{10} $ & &  \texttt{150b3} & $-15,5$ \\
\cline{2-2}\cline{4-4}\cline{6-7}
& $ \cC_2\times\cC_6, \cC_6 $   & & $\cC_6\times\cC_{6} $ & &  \texttt{98a3} & $-7, 21$\\
\cline{2-3}\cline{4-4}\cline{6-7}
& \multirow{2}{*}{$\cC_2\times\cC_2, \cC_4, \cC_4 $}   & \multirow{8}{*}{$3$}& $\cC_2\times\cC_{4} $& &  \texttt{15a5} & $ 5, -1,-5$  \\
\cline{4-4}\cline{6-7}
&   & & $\cC_4\times\cC_{4} $& &  \texttt{64a4} & $-1,2,-2$ \\
\cline{2-2}\cline{4-4}\cline{6-7}
& $ \cC_2\times\cC_2, \cC_8, \cC_8 $   & & $\cC_4\times\cC_{8} $ & &  \texttt{2880r6} & $-1,6,-6$\\
\cline{2-2}\cline{4-4}\cline{6-7}
& $ \cC_2\times\cC_2, \cC_4, \cC_8 $   & & $\cC_2\times\cC_{8} $ & &  \texttt{24a6} & $-2,2,-1 $ \\
\cline{2-2}\cline{4-4}\cline{6-7}
& $\cC_2\times\cC_2, \cC_4, \cC_{12} $   & & $\cC_2\times\cC_{12} $ & &  \texttt{30a3} & $ -15,5,-3$ \\
\cline{2-2}\cline{4-4}\cline{6-7}
& $ \cC_2\times\cC_2, \cC_4,\cC_{16} $   & & $\cC_2\times\cC_{16} $& &  \texttt{3150bk1} & $-7, 105, -15$\\
\cline{2-2}\cline{4-4}\cline{6-7}
& $ \cC_2\times\cC_6, \cC_4, \cC_4 $   & & $\cC_2\times\cC_{12} $ & &  \texttt{450g1} & $-15,-3,5 $\\
\cline{2-2}\cline{4-5}\cline{6-7}
& $ \cC_2\times\cC_2, \cC_6, \cC_6 $   & &$\cC_6\times\cC_{6} $  & \multirow{2}{*}{$8$} &  \texttt{98a4} & $2, -7, 21$\\
\cline{2-3}\cline{4-4}\cline{6-7}
& $ \cC_2\times\cC_2, \cC_4, \cC_4, \cC_6$   & $4$& $\cC_2\times\cC_{12} $&  &  \texttt{30a7} & $ 10,-5, -2, -3$\\
\cline{2-2}
\hline\hline
\multirow{2}{*}{$\cC_3$}  & $ \cC_{15}$   &\multirow{2}{*}{$1$} & $ \cC_{15}$ & \multirow{2}{*}{$2$} &  \texttt{50a3} & $5$\\
\cline{2-2}\cline{4-4}\cline{6-7}
& $  \cC_3\times\cC_3 $  & & $  \cC_3\times\cC_3 $ & &  \texttt{19a1} & $-3$\\
\hline\hline
\multirow{7}{*}{$\cC_4$}  & $ \cC_2\times\cC_4$   & \multirow{4}{*}{$1$}& $ \cC_2\times\cC_4$& \multirow{4}{*}{$2$}&  \texttt{17a1} & $-1$ \\
\cline{2-2}\cline{4-4}\cline{6-7}
& $ \cC_2\times\cC_8 $   & & $ \cC_2\times\cC_8 $& &  \texttt{192c6} & $-2$ \\
\cline{2-2}\cline{4-4}\cline{6-7}
& $ \cC_2\times\cC_{12} $   & & $ \cC_2\times\cC_{12} $& &  \texttt{150c3} & $-15$ \\
\cline{2-2}\cline{4-4}\cline{6-7}
& $ \cC_4\times\cC_4 $   & & $ \cC_4\times\cC_4 $& &  \texttt{40a4} & $-1$ \\
\cline{2-3}\cline{4-7}\cline{6-7}
& $ \cC_2\times\cC_4,\cC_{12} $   & \multirow{3}{*}{$2$}& $ \cC_2\times\cC_{12} $ & \multirow{3}{*}{$4$} &  \texttt{90c1} & $-15,-3$\\
\cline{2-2}\cline{4-4}\cline{6-7}
& $ \cC_2\times\cC_4, \cC_8, \cC_8 $   & & $ \cC_2\times\cC_8 $& &  \texttt{15a7} & $15, 3,5$ \\
\cline{2-2}\cline{4-4}\cline{6-7}
&   $ \cC_2\times\cC_8, \cC_8, \cC_8 $ & & $ \cC_4\times\cC_8 $& &  \texttt{240d6} & $-1,6, -6 $\\
\hline\hline
$\cC_5$ & $ \cC_{15}$   & $1$& $ \cC_{15}$& $2$ &  \texttt{50b1} & $5$\\
\hline\hline
\multirow{3}{*}{$\cC_6$}  & $ \cC_2\times\cC_6 $   & $1$& $ \cC_2\times\cC_6 $& $2$ &  \texttt{14a4} & $-7$ \\
\cline{2-3}\cline{4-5} \cline{6-7}
& $ \cC_2\times\cC_6, \cC_3\times\cC_6 $   & $2$& $ \cC_6\times\cC_6 $ &  \multirow{2}{*}{$4$} &  \texttt{14a1} & $-7,-3$\\
\cline{2-3}\cline{4-4}\cline{6-7}
& $\cC_2\times\cC_6, \cC_{12}, \cC_{12}$   & $3$& $ \cC_2\times\cC_{12} $& &  \texttt{30a1} & $-15,-3, 5$\\
\cline{2-2}\cline{4-4}\cline{6-7}
\hline\hline
\multirow{2}{*}{$\cC_8$}  & $ \cC_2\times\cC_8 $   &  $1$&$ \cC_2\times\cC_8 $ & $2$ &  \texttt{15a4} & $-1$\\
\cline{2-3}\cline{4-5}\cline{6-7}
& $\cC_2\times\cC_8, \cC_{16}, \cC_{16}$   & $3$ & $\cC_2\times\cC_{16}$&  $4$ &  \texttt{210e1} & $-7, 105, -15$ \\
\hline\hline
$\cC_{10}$ & $ \cC_2\times\cC_{10}$   & $1$& $ \cC_2\times\cC_{10}$& $2$ &  \texttt{66c1} & $33$\\
\hline\hline
$\cC_{12}$ & $ \cC_2\times\cC_{12}$   & $1$& $ \cC_2\times\cC_{12}$& $2$ &  \texttt{90c3} & $-15$ \\
\hline
\hline
\multirow{9}{*}{$\cC_2\times\cC_2$}  & $ \cC_2\times\cC_4 $   &  \multirow{4}{*}{$1$}& $ \cC_2\times\cC_4 $& \multirow{4}{*}{$2$} &  \texttt{33a1} & $-11$ \\
\cline{2-2}\cline{4-4}\cline{6-7}
& $ \cC_2\times\cC_6 $   & &$ \cC_2\times\cC_6 $ & &  \texttt{30a6} & $-3$ \\
\cline{2-2}\cline{4-4}\cline{6-7}
& $ \cC_2\times\cC_8$   & & $ \cC_2\times\cC_8 $& &  \texttt{63a2} & $-3$\\
\cline{2-2}\cline{4-4}\cline{6-7}
& $ \cC_2\times\cC_{12} $   & & $ \cC_2\times\cC_{12} $& &  \texttt{960o6} & $6$\\
\cline{2-3}\cline{4-5}\cline{6-7}
&  \multirow{2}{*}{$ \cC_2\times\cC_4, \cC_2\times\cC_4 $}   &  \multirow{4}{*}{$2$}& $\cC_4\times\cC_4 $&  \multirow{6}{*}{$4$} &  \texttt{17a2} & $17, -1$\\
\cline{4-4}\cline{6-7}
&     & & $\cC_4\times\cC_8 $& &  \texttt{1200j4} & $-5,5$\\
\cline{2-2}\cline{4-4}\cline{6-7}
& $ \cC_2\times\cC_4, \cC_2\times\cC_6 $   & & $\cC_2\times\cC_{12} $& &  \texttt{90c2} & $6,-3$ \\
\cline{2-2}\cline{4-4}\cline{6-7}
& $ \cC_2\times\cC_4, \cC_2\times\cC_8 $   & & $\cC_4\times\cC_8 $& &  \texttt{75b3} & $-5,5$\\
\cline{2-3}\cline{4-4}\cline{6-7}
& $ \cC_2\times\cC_4, \cC_2\times\cC_4, \cC_2\times\cC_4 $   &  \multirow{2}{*}{$3$}& $ \cC_4\times\cC_4 $  & &  \texttt{15a2} & $-5, 5, -1$\\
\cline{2-2}\cline{4-4}\cline{6-7}
& $ \cC_2\times\cC_4, \cC_2\times\cC_4, \cC_2\times\cC_8 $   & & $ \cC_4\times\cC_8 $ & &  \texttt{510e5} & $-34, 34, -1$ \\
\hline\hline
\multirow{5}{*}{$\cC_2\times\cC_4$} & $ \cC_2\times\cC_8 $   &\multirow{2}{*}{$1$} &$ \cC_2\times\cC_8 $  & \multirow{2}{*}{$2$} &  \texttt{15a3} & $5$ \\
\cline{2-2}\cline{4-4}\cline{6-7}
& $ \cC_4\times\cC_4 $   & &$ \cC_4\times\cC_4 $ & &  \texttt{195a3} & $-1$\\
\cline{2-3}\cline{4-5}\cline{6-7}
& $ \cC_2\times\cC_8, \cC_4\times\cC_4 $   & \multirow{2}{*}{$2$}& $ \cC_4\times\cC_8 $ &\multirow{3}{*}{$4$} &  \texttt{15a1} & $5, -1$\\
\cline{2-2}\cline{4-4}\cline{6-7}
& $ \cC_2\times\cC_8, \cC_2\times\cC_8$   & & $ \cC_4\times\cC_8 $ & &  \texttt{1230f2} & $41, -1$\\
\cline{2-3}\cline{4-4}\cline{6-7}
& $ \cC_2\times\cC_8, \cC_2\times\cC_8, \cC_4\times\cC_4 $   & $3$& $ \cC_4\times\cC_8 $ & &  \texttt{210e3} & $-6, 6, -1$\\
\hline\hline
$\cC_2\times\cC_6$ & $ \cC_2\times\cC_{12}$   & $1$& $ \cC_2\times\cC_{12}$& $2$ &  \texttt{90c6} & $6$\\
\hline\end{tabular}
\end{table}


\end{document}